\newtheorem{theorem}{Theorem}
\theoremstyle{plain}
\newtheorem{corollary}{Corollary}
\newtheorem{definition}{Definition}
\newtheorem{lemma}{Lemma}
\newtheorem{proposition}{Proposition}
\numberwithin{equation}{section}
\newcommand{\stft}{short-time Fourier transform}
\newcommand{\fif}{if and only if}
\newcommand{\tfs}{time-frequency shift}
\newcommand{\modsp}{modulation space}
\newcommand{\beqa}{\begin{eqnarray*}}
\newcommand{\eeqa}{\end{eqnarray*}}
\newcommand{\field}[1]{\mathbb{#1}}
\newcommand{\bR}{\field{R}}        
\newcommand{\bZ}{\field{Z}}        
\newcommand{\bC}{\field{C}}        
 \def\cF{\mathcal{F}}              
 \def\cB{\mathcal{B}}
 \def\cL{\mathcal{L}}
\def\<{\left<}
\def\>{\right>}
\def\inv{^{-1}}
\def\mv1{M_v^1}
\begin{document}
\title[On Gabor frames with Hermite functions]{ Banach Gabor frames with
Hermite functions: polyanalytic spaces from\ the Heisenberg group}
\author{Lu\'{\i}s Daniel Abreu}
\address{CMUC, Department of Mathematics of University of Coimbra, School of
Science and Technology (FCTUC) 3001-454 Coimbra, Portugal }
\email{daniel@mat.uc.pt}
\urladdr{http://www.mat.uc.pt/\symbol{126}daniel/}
\author{Karlheinz Gr\"{o}chenig}
\address{Faculty of Mathematics, University of Vienna, Nordbergstrasse 15,
A-1090 Wien, Austria}
\email{karlheinz.groechenig@univie.ac.at}
\urladdr{http://homepage.univie.ac.at/karlheinz.groechenig/}
\thanks{L.\ D.\ A.\ research was partially supported by CMUC/FCT and FCT
project PTDC/MAT/114394/2009, POCI 2010 and FSE. K.\ G.\ was supported in
part by the National Research Network S106 SISE of the Austrian Science
Foundation (FWF)}
\date{}
\subjclass[2000]{46E22, 33C45, 81S30, 30H05, 41A05, 42C15}
\keywords{Gabor frames, Heisenberg group, Hermite functions, polyanalytic
functions, coorbit and localization theory, Fock spaces}
\dedicatory{Dedicated to Paul L. Butzer on the occasion of his 80th birthday}

\begin{abstract}
Gabor frames with Hermite functions are equivalent to sampling sequences in
true Fock spaces of polyanalytic functions. In the $L^{2}$-case, such an
equivalence follows from the unitarity of the polyanalytic Bargmann
transform. We will introduce Banach spaces of polyanalytic functions and
investigate the mapping properties of the polyanalytic Bargmann transform on
modulation spaces. By applying the theory of coorbit spaces and localized
frames to the Fock representation of the Heisenberg group, we derive
explicit polyanalytic sampling theorems which can be seen as a polyanalytic
version of the lattice sampling theorem discussed by J. M. Whittaker in
Chapter 5 of his book \emph{Interpolatory Function Theory}.
\end{abstract}

\maketitle

\section{Introduction}

In this note we will be concerned with \emph{Gabor expansions} of the form%
\begin{equation}
f(t)=\sum_{l,k\in 
\mathbb{Z}
}c_{k,l}e^{2\pi i\alpha lt}h_{n}(t-\beta k)\text{,}
\label{HermiteExpansions}
\end{equation}%
where $\alpha $ and $\beta $ are real constants and $h_{n}$ are the \emph{%
Hermite functions}%
\begin{equation*}
h_{n}(t)=c_{n}e^{\pi t^{2}}\left( \frac{d}{dt}\right) ^{n}\left( e^{-2\pi
t^{2}}\right)
\end{equation*}
where $c_{n}$ is chosen so that $\|h_n \|_2 =1$. Expansions of type (\ref%
{HermiteExpansions}) are useful for the multiplexing of signals \cite{Balan}%
, \cite{CharlyYurasuper}, \cite{Abreusampling} and image processing \cite{G}.

Gabor expansions with Hermite functions 
have been introduced in mathematical time-frequency analysis \cite%
{CharlyYura} and studied further in \cite{Fuhr}, \cite{CharlyYurasuper} and 
\cite{Abreusampling}, with an emphasis on vector-valued Gabor frames
(so-called Gabor super-frames).

The properties of Gabor expansions with Hermite functions are the\ result
of\ an interplay between classical analysis (orthogonal polynomials,
Weierstrass sigma functions) and modern mathematical methods (frame theory,
group representations, and modulation spaces).

It has been discovered recently \cite{Abreusampling} that the construction
of expansions of type (\ref{HermiteExpansions}) is equivalent to sampling
problems in spaces of functions which satisfy the generalized Cauchy-Riemann
equations of the form%
\begin{equation}
\left( \frac{d}{d\overline{z}}\right) ^{n}F(z)=\frac{1}{2^{n}}\left( \frac{%
\partial }{\partial x}+i\frac{\partial }{\partial \xi }\right) ^{n}F(x+i\xi
)=0\text{.}  \label{eq:c1}
\end{equation}%
This is equivalent to saying that $F(z)$\ is a polynomial of order $n-1$ in $%
\overline{z}$\ with analytic functions $\{\varphi _{k}(z)\}_{k=0}^{n-1}$\ as
coefficients:%
\begin{equation}
F(z)=\sum_{k=0}^{n-1}\overline{z}^{k}\varphi _{k}(z)  \label{polypolynomial}
\end{equation}%
Such functions are known as \emph{polyanalytic functions. }They have been
investigated thoroughly, notably by the Russian school led by Balk \cite%
{Balk}, and they provide extensions of classical operators from complex
analysis \cite{BegehrHille}. The connection of polyanalytic function theory
to Gabor expansions seems to be yet another instance of how --- in the words
of Folland \cite{Folland} --- "the abstruse meets the applicable" in
time-frequency analysis. Indeed, time-frequency analysis is prone to reveal
unexpected relations to other fields of mathematics. Two recent examples are
the associations with\ Banach algebras \cite{GL} and with noncommutative
geometry \cite{Luef}.

Our objective is to investigate the connection between Gabor frames and
polyanalytic function theory in more detail. Our main contributions are the
following:

\begin{enumerate}
\item We will obtain explicit formulas for the sampling and interpolation of
polyanalytic functions on a lattice in the complex plane.

\item We extend the theory of the polyanalytic Fock space from Hilbert space
to Banach spaces and study polyanalytic functions satisfying $L^{p}$%
-condition. Precisely, we investigate the space $\mathcal{F}_{p}^{n}(%
\mathbb{C}
)$\ consisting of all polyanalytic functions $F$ of order $n$, i.e.,
satisfying~\eqref{eq:c1}, such that 
\begin{equation*}
\left\Vert F\right\Vert ^{p}=\int_{%
\mathbb{C}
}\left\vert F(z)\right\vert ^{p}e^{-\pi p\frac{\left\vert z\right\vert }{2}%
^{2}}dz
\end{equation*}%
is finite. For this purpose we use the theory of modulation spaces \cite%
{FeiModulation} and develop the $L^{p}$-theory of the polyanalytic Bargmann
transform which so far has only been studied in the $L^{2}$ case \cite%
{Abreusampling}.

\item We then extend the frame and sampling expansions beyond the Hilbert
space. The tools are taken from coorbit theory \cite{FG0}, \cite{FG}, \cite%
{atomGroch} and the theory of localized frames \cite{Localization_one}.
\end{enumerate}

As a specific result we state a polyanalytic version of a sampling formula
for a lattice. The sampling theorem is in the spirit of the fundamental
Whittaker-Shannon-Kotel\'{}nikov formula, as discussed for instance in
Whittaker's classical treatise \emph{Interpolatory Function Theory }\cite{W}%
. Let $\Lambda =\alpha (\bZ+i\bZ)$ be the square lattice consisting of the
points $\lambda =\alpha l+i\alpha m$, $k,m\in \bZ$ and let $\sigma _{\Lambda
^{\circ }}(z)$ be the classical Weierstrass sigma function associated to the
adjoint lattice $\Lambda ^{\circ }=\alpha \inv(\bZ+i\bZ)$ and set 
\begin{equation*}
S_{\Lambda ^{0}}^{n}(z)=\left( \frac{\pi ^{n}}{n!}\right) ^{\frac{1}{2}%
}e^{\pi \left\vert z\right\vert ^{2}}\left( \frac{d}{dz}\right) ^{n}\left[
e^{-\pi \left\vert z\right\vert ^{2}}\frac{\left( \sigma _{\Lambda
^{0}}(z)\right) ^{n+1}}{n!z}\right] .
\end{equation*}%
%
%
%
%
%

\textbf{Theorem.}\emph{\ If }$\alpha ^2 <\frac{1}{n+1}$\emph{, then every }$%
F\in \mathcal{F}_{p}^{n+1}(%
\mathbb{C}
)$\emph{\ possesses the sampling expansion}%
\begin{equation*}
F(z)=\sum_{\lambda \in \alpha (\bZ +i\bZ )} F(\lambda )e^{\pi \overline{%
\lambda }z-\pi \left\vert \lambda \right\vert ^{2}/2} \, S_{\Lambda ^0
}^{n}(z-\lambda ),
\end{equation*}%
\emph{with unconditional convergence in $\cF ^n_p(\bC )$ for $1\leq p<\infty 
$. }

\vspace{ 3 mm}

The outline of the paper is as follows. In section 2, we explain the
connection between the \stft with Hermite functions and the true
polyanalytic Bargmann transform. We extend the $L^{2}$-theory of the true
polyanalytic Bargmann transform to a general class of Banach spaces of
polyanalytic functions in section 3. Then section 4 studies Gabor frames
with Hermite functions in $L^{2}(%
\mathbb{R}
)$. We provide a different proof of the sufficient condition given in \cite%
{CharlyYura} for the lattice parameters which generate those frames. In the
last section we study Gabor Banach frames with Hermite functions using
coorbit theory and localized frames associated with the representations of
the Heisenberg group, and derive the corresponding sampling theorems for the
polyanalytic Fock spaces.

\section{Gabor transforms with Hermite functions}

\subsection{The Bargmann transform}

Expansions of the type (\ref{HermiteExpansions}) are closely related to the
samples of the \stft\ of $f$ with respect to the Hermite windows\ $h_{n}$.

We recall that the \stft\ of a function or distribution $f$ with respect to
a window function $g$ is defined to be 
\begin{equation}
V_{g}f(x,\xi )=\int_{%
\mathbb{R}
}f(t)\overline{g(t-x)}e^{-2\pi i\xi t}dt.  \label{Gabor}
\end{equation}%
If we choose the Gaussian function $h_{0}(t)=2^{\frac{1}{4}}e^{-\pi t^{2}}$
as a window in \eqref{Gabor}, then a simple calculation shows that 
\begin{equation}
e^{-i\pi x\xi +\pi \frac{\left\vert z\right\vert ^{2}}{2}}V_{h_{0}}f(x,-\xi
)=\int_{%
\mathbb{R}
}f(t)e^{2\pi tz-\pi z^{2}-\frac{\pi }{2}t^{2}}dt=\mathcal{B}f(z)\text{.}
\label{Bargmann}
\end{equation}%
Here $\mathcal{B}f (z)$ is the usual Bargmann transform of $f$, see for
instance \cite{folland89}. The Bargmann transform $\mathcal{B}f$\ is an
entire function and thus satisfies the Cauchy-Riemann equation 
\begin{equation*}
\frac{d}{d\overline{z}}\, \mathcal{B}f=0\text{.}
\end{equation*}%
Furthermore, $\mathcal{B}$ is an unitary isomorphism from $L^{2}(\bR )$ onto
the Bargmann-Fock space $\cF (\bC )$ consisting of all entire functions
satisfying 
\begin{equation}
\left\Vert F\right\Vert _{\mathcal{F}(%
\mathbb{C}
)}^{2}=\int_{%
\mathbb{C}
}\left\vert F(z)\right\vert ^{2}e^{-\pi \left\vert z\right\vert
^{2}}dz<\infty .  \label{Focknorm}
\end{equation}

\subsection{The polyanalytic Bargmann transform}

Now choose a general Hermite function \ $h_{n}$ as a window for the \stft\
in \eqref{Gabor}. A calculation (see \cite{BS93} or \cite{CharlyYura} for
details) shows that%
\begin{equation}
e^{-i\pi x\xi +\frac{\pi }{2}\left\vert z\right\vert ^{2}}V_{h_{n}}f(x,-\xi
)=\left( \frac{\pi ^{n}}{n!}\right) ^{\frac{1}{2}}\sum_{0\leq k\leq n}\binom{%
n}{k}(-\pi \overline{z})^{k}\left( \frac{d}{dz}\right) ^{n-k}(\mathcal{B}%
f)(z)=F(z)\text{.}  \label{gaborhermite1}
\end{equation}%
Now we have a serious obstruction for exploiting complex analysis tools. The
function $F$ on the right hand side of (\ref{gaborhermite1}) is no longer
analytic. However, after differentiating (\ref{gaborhermite1}) $n+1$ times
with respect to $\overline{z}$, we see that $F$ satisfies the equation%
\begin{equation}
\left( \frac{d}{d\overline{z}}\right) ^{n+1}F(z)=0\text{.}
\label{polyanalytic}
\end{equation}%
A function $F$ satisfying (\ref{polyanalytic}) is called polyanalytic of
order $n+1$. By using the Leibnitz formula, we may write (\ref{gaborhermite1}%
) more compactly as 
\begin{equation}
e^{-i\pi x\xi +\frac{\pi }{2}\left\vert z\right\vert ^{2}}V_{h_{n}}f(x,-\xi
)=\left( \frac{\pi ^{n}}{n!}\right) ^{\frac{1}{2}}e^{\pi \left\vert
z\right\vert ^{2}}\frac{d^{n}}{dz^{n}}\left[ e^{-\pi \left\vert z\right\vert
^{2}}\mathcal{B}f(z)\right] \text{.}  \label{eq:hmhm}
\end{equation}%
This motivates the following definition:%
\begin{equation}
\mathcal{B}^{n+1}f(z)=e^{-i\pi x\xi +\frac{\pi }{2}\left\vert z\right\vert
^{2}}V_{h_{n}}f(x,-\xi ).  \label{polyBargmann}
\end{equation}%
The map $\mathcal{B}^{n}$ is called the \emph{true polyanalytic Bargmann
transform} of order $n$ and has been studied in \cite{Abreusampling} and 
\cite{Abreustructure}. As a consequence of the orthogonality relations for
the \stft\ it was shown that $\left\Vert \mathcal{B}^{n}f\right\Vert _{%
\mathcal{F}(%
\mathbb{C}
)}=\left\Vert f\right\Vert _{L^{2}(%
\mathbb{R}
)}$. See also \cite{Hunik} for an alternative approach.\ Furthermore, the
range $\cB^{n}(L^{2}(\mathbb{R}))$ under $\mathcal{B}^{n}$ consists exactly
of all polyanalytic functions $F$ satisfying the integrability condition $%
\int_{\bC}|F(z)|^{2}e^{-\pi |z|^{2}}\,dz<\infty $ and such that, for some
entire function $H$,\ 
\begin{equation*}
F(z)=\left( \frac{\pi ^{n}}{n!}\right) ^{\frac{1}{2}}e^{-\pi \left\vert
z\right\vert ^{2}}\left( \frac{d}{dz}\right) ^{n}\left[ e^{-\pi \left\vert
z\right\vert ^{2}}H(z)\right] \text{.}
\end{equation*}%
%
%
%
We denote the range of $\cB^{n}$ by $\mathcal{F}^{n}(%
\mathbb{C}
)=\cB(L^{2}(\bR))$ and call $\cF^{n}(\bC)$ the true polyanalytic Fock space
of order $n$, 
see \cite{Abreusampling}, \cite{Abreustructure} and \cite{VasiFock}. The
prefix "true" has been used by Vasilevski \cite{VasiFock} to distinguish
them from the polyanalytic Fock space $\mathbf{F}^{n}(%
\mathbb{C}
)$, which consists of \emph{all }polyanalytic functions up to order $n$.
This space possesses the orthogonal decomposition:%
\begin{equation}
\mathbf{F}^{n}(%
\mathbb{C}
)=\mathcal{F}^{1}(%
\mathbb{C}
)\oplus ...\oplus \mathcal{F}^{n}(%
\mathbb{C}
),  \label{orthogonal}
\end{equation}%
We remark that the orthogonality of the $\cF^{k}(\bC)$ follows directly from
the orthogonality relations of the \stft\ and \eqref{polyBargmann}. The
spaces $\mathcal{F}^{n}(%
\mathbb{C}
)$ also appear in connection to the eigenspaces of an operator related to
the Landau levels \cite{AAZ}.

The orthogonal decomposition ~(\ref{orthogonal}) suggests a second
transform. Define $\mathbf{B}^{n}:L^{2}(%
\mathbb{R}
,%
\mathbb{C}
^{n})\rightarrow \mathbf{F}^{n}(%
\mathbb{C}
)$ by mapping each vector $\mathbf{f=(}f_{1},...,f_{n}\mathbf{)}\in L^{2}(%
\mathbb{R}
,%
\mathbb{C}
^{n})$ to the following polyanalytic function of order $n$: 
\begin{equation}
\mathbf{B}^{n}\mathbf{f=}\mathcal{B}^{1}f_{1}+...+\mathcal{B}^{n}f_{n}\,.
\label{polyanalyticBargmann}
\end{equation}

This map is again a Hilbert space isomorphism and is called the \emph{%
polyanalytic Bargmann transform }\cite{Abreusampling}\emph{. }

The polyanalytic Bargmann transform possesses an interesting interpretation
in signal processing.

\begin{enumerate}
\item Given $n$ signals $f_{1}, \dots ,f_{n}$, with finite energy ($f_{k}\in
L^{2}(%
\mathbb{R}
)$ for every $k$), process each individual signal by evaluating $\mathcal{B}%
^{k}f_{k}$. This encodes each signal into one of the $n$ orthogonal spaces $%
\mathcal{F}^{1}(%
\mathbb{C}
), \dots ,\mathcal{F}^{n}(%
\mathbb{C}
)$.

\item Construct a new signal $F= \mathbf{B}\mathbf{f} =\mathcal{B}%
^{1}f_{1}+...+\mathcal{B}^{n}f_{n} $ as a superposition of the $n$ processed
signals.

\item Sample, transmit, or process $F.$

\item Let $P^{k}$ denote the orthogonal projection from $\mathbf{F}^{n}(%
\mathbb{C}
)$ onto $\mathcal{F}^{k}(%
\mathbb{C}
)$, then $P^{k}\left( F\right) =\mathcal{B}^{k}f_{k}$ by virtue of (\ref%
{orthogonal}).

\item Finally, after inverting each of the transforms $\mathcal{B}^{k}$, we
recover each component $f_{k}$ in its original form.
\end{enumerate}

The combination of $n$ independent signals into a single signal $\mathbf{B}%
^n \mathbf{f}$ and the subsequent processing are referred to as multiplexing.

The projection $P^{k}$ can be written explicitly as an integral operator,
see Proposition~3 and \cite{Abreustructure}.

\section{$L^{p}$ theory of polyanalytic Fock spaces}

\subsection{The spaces $\mathbf{F}_{p}^{n}$ and $\mathcal{F}_{p}^{n}$}

We next introduce the $L^{p}$ version of the polyanalytic Bargmann-Fock
spaces. For\ $p\in \lbrack 1,\infty \lbrack $ write $\mathcal{L}_{p}(%
\mathbb{C}
)$ to denote the Banach space of all measurable functions equiped with the
norm 
\begin{equation*}
\left\Vert F\right\Vert _{\mathcal{L}_{p}(%
\mathbb{C}
)}=\Big(\int_{%
\mathbb{C}
}\left\vert F(z)\right\vert ^{p}e^{-\pi p\frac{\left\vert z\right\vert }{2}%
^{2}}\, dz \Big) ^{1/p} \text{.}
\end{equation*}%
For $p=\infty $, we have $\left\Vert F\right\Vert _{\mathcal{L}_{\infty }(%
\mathbb{C}
)}=\sup_{z\in 
\mathbb{C}
}\left\vert F(z)\right\vert e^{-\pi \frac{\left\vert z\right\vert ^{2}}{2}}.$
With this notation we extend the definitions of polyanalytic Fock spaces to
the Banach space setting.

\begin{definition}
We say that a function $F$ belongs to the \emph{polyanalytic Fock space} $%
\mathbf{F}_{p}^{n}\left( 
\mathbb{C}
\right) $, if $\left\Vert F\right\Vert _{\mathcal{L}_{p}(%
\mathbb{C}
)}<\infty $ and $F$ is polyanalytic of order $n.$
\end{definition}

\begin{definition}
We say that a function $F$ belongs to the \emph{true polyanalytic Fock space}
$\mathcal{F}_{p}^{n+1}(%
\mathbb{C}
)$ if $\left\Vert F\right\Vert _{\mathcal{L}_{p}(%
\mathbb{C}
)}<\infty $ and there exists an entire function $H$ such that 
\begin{equation*}
F(z)=\left( \frac{\pi ^{n}}{n!}\right) ^{\frac{1}{2}}e^{-\pi \left\vert
z\right\vert ^{2}}\left( \frac{d}{dz}\right) ^{n}\left[ e^{-\pi \left\vert
z\right\vert ^{2}}H(z)\right] .
\end{equation*}
\end{definition}

Clearly, $\mathcal{F}_{p}^{1}(%
\mathbb{C}
)=\mathcal{F}_{p}(%
\mathbb{C}
)$ is the standard Bargmann-Fock space. The space $\mathcal{F}_{1}^1(%
\mathbb{C}
)$ is the complex version of the Feichtinger algebra \cite{Fei,FeiZim}, and
it will play an important role in last section of the paper.

\subsection{Orthogonal decompositions}

In dealing with polyanalytic functions the following version of integration
by parts is useful. The disc of radius $r$ is denoted by $\mathbf{D}_{r}$ as
usual, its boundary is $\delta \mathbf{D}_{r}$.

\begin{lemma}
\label{intpart} If $f,g\in C^{1}(\mathbf{D}_{r})$, then 
\begin{equation}
\int_{\mathbf{D}_{r}}f(z)\frac{d}{d\overline{z}}\overline{g(z)}dz=-\int_{%
\mathbf{D}_{r}}\frac{d}{d\overline{z}}f(z)\overline{g(z)}dz+\frac{1}{i}%
\int_{\delta \mathbf{D}_{r}}f(z)\overline{g(z)}dz\text{,}  \label{Green_1}
\end{equation}%
where \ the line integral over the circle $\delta \mathbf{D}_{r}$ is
oriented counterclockwise.
\end{lemma}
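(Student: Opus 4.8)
The plan is to read \eqref{Green_1} as the combination of two standard facts: the Leibniz rule for the Wirtinger operator $\frac{d}{d\overline z}$ and the complex form of Green's theorem on the disc $\mathbf{D}_r$. Since $\frac{d}{d\overline z}=\tfrac12\big(\frac{\partial}{\partial x}+i\frac{\partial}{\partial \xi}\big)$ is a first-order linear differential operator, it satisfies the product rule on $C^1$ functions, so applying it to $u:=f\,\overline{g}\in C^1(\mathbf{D}_r)$ gives
\[
\frac{d}{d\overline z}\big(f(z)\,\overline{g(z)}\big)=\Big(\frac{d}{d\overline z}f(z)\Big)\overline{g(z)}+f(z)\,\frac{d}{d\overline z}\,\overline{g(z)}.
\]
Integrating over $\mathbf{D}_r$ and solving for $\int_{\mathbf{D}_r}f\,\frac{d}{d\overline z}\overline{g}\,dz$ isolates on the right the two area integrals appearing in \eqref{Green_1}, and reduces the lemma to expressing the remaining term $\int_{\mathbf{D}_r}\frac{d}{d\overline z}u\,dz$ as a line integral over $\delta\mathbf{D}_r$. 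This last conversion is the only nontrivial ingredient.

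For it I would invoke Green's theorem in its complex form: writing the oriented line element as $dz=dx+i\,d\xi$ on the counterclockwise circle $\delta\mathbf{D}_r$, one has
\[
\int_{\delta\mathbf{D}_r}u(z)\,dz=\int_{\delta\mathbf{D}_r}\big(u\,dx+iu\,d\xi\big),
\]
so the real Green's theorem with $P=u$ and $Q=iu$ turns this boundary integral into $\int_{\mathbf{D}_r}\big(\frac{\partial(iu)}{\partial x}-\frac{\partial u}{\partial \xi}\big)\,dz$. The integrand equals $i\big(\frac{\partial u}{\partial x}+i\frac{\partial u}{\partial \xi}\big)$, which is precisely a constant multiple of $\frac{d}{d\overline z}u$; tracking this constant through the identity expresses $\int_{\mathbf{D}_r}\frac{d}{d\overline z}u\,dz$ as the appropriately normalized multiple of $\int_{\delta\mathbf{D}_r}u\,dz$, i.e. the boundary term in \eqref{Green_1}. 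Substituting this back into the integrated Leibniz identity then produces \eqref{Green_1}.

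I expect the entire difficulty to be bookkeeping rather than conceptual. The two points to keep straight are the clash of notation --- $dz$ denotes planar Lebesgue measure in the area integrals but the complex line element on $\delta\mathbf{D}_r$ --- and the normalizing constant multiplying the boundary integral, which is pinned down by the convention chosen for $\frac{d}{d\overline z}$ and by the counterclockwise orientation (so that the sign comes out correctly). The hypothesis $f,g\in C^1(\mathbf{D}_r)$ is exactly what makes $u=f\,\overline g$ admissible in Green's theorem on the closed disc, so no regularization or limiting argument over shrinking annuli is required.
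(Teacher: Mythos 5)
Your route is the same as the paper's: the authors dismiss the lemma in one line (``follows from Green's formula,'' alternatively direct verification on polyanalytic polynomials plus density), and your write-up simply supplies that Green's-theorem computation, organized via the Leibniz rule for $\tfrac{d}{d\overline z}$ applied to $u=f\overline g$. The method is correct, but the one step you defer --- ``tracking the constant'' --- does not come out as you assert. With the paper's own normalization $\tfrac{d}{d\overline z}=\tfrac12\bigl(\tfrac{\partial}{\partial x}+i\tfrac{\partial}{\partial \xi}\bigr)$ (see \eqref{eq:c1}), your Green's-theorem step with $P=u$, $Q=iu$ gives
\begin{equation*}
\int_{\delta \mathbf{D}_{r}}u(z)\,dz \;=\; \int_{\mathbf{D}_{r}}\Bigl(i\frac{\partial u}{\partial x}-\frac{\partial u}{\partial \xi}\Bigr)\,dz \;=\; 2i\int_{\mathbf{D}_{r}}\frac{d}{d\overline z}u(z)\,dz\,,
\end{equation*}
so the boundary term in the integration-by-parts identity is $\frac{1}{2i}\int_{\delta \mathbf{D}_{r}}f\overline g\,dz$, not $\frac{1}{i}\int_{\delta \mathbf{D}_{r}}f\overline g\,dz$ as printed in \eqref{Green_1}. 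A quick test confirms this: take $f\equiv 1$ and $g(z)=z$, so $\tfrac{d}{d\overline z}\overline{g}=1$; the left side of \eqref{Green_1} is $\pi r^{2}$, the first right-hand term vanishes, and $\frac{1}{i}\int_{|z|=r}\overline z\,dz=\frac{1}{i}\cdot 2\pi i r^{2}=2\pi r^{2}$, off by a factor of $2$. In other words, your argument is sound and in fact exposes a harmless factor-of-two slip in the lemma as stated (harmless because in the paper the lemma is only used for boundary terms that are shown to vanish as $r\to\infty$, where constants are irrelevant); you should carry the constant through explicitly rather than asserting it matches the displayed formula.
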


Iterating (\ref{Green_1}) one obtains the formula 
\begin{eqnarray}
\int_{\mathbf{D}_{r}}f(z)\left( \frac{d}{d\overline{z}}\right) ^{n}\overline{%
g(z)}dz &=&(-1)^{n}\int_{\mathbf{D}_{r}}\left( \frac{d}{d\overline{z}}%
\right) ^{n}f(z)\overline{g(z)}dz  \notag \\
&&+\frac{1}{i}\sum_{j=0}^{n-1}(-1)^{j}\int_{\delta \mathbf{D}_{r}}\left( 
\frac{d}{d\overline{z}}\right) ^{j}f(z)\left( \frac{d}{d\overline{z}}\right)
^{n-j-1}\overline{g(z)}dz\text{.}  \label{Green_n}
\end{eqnarray}%
The Lemma follows from Green%
\'{}%
s formula. It can also be seen directly for polyanalytic polynomials $p(z,%
\overline{z})$ and then extended by density.

Now recall that the monomials 
\begin{equation*}
e_{n}(z)=\left( \frac{\pi ^{n}}{n!}\right) ^{\frac{1}{2}}z^{n}
\end{equation*}%
provide an orthonormal basis for $\mathcal{F}_{2}(%
\mathbb{C}
)$ and that $\mathcal{B(}h_{n})\mathcal{=}e_{n}$. In addition, they are
orthogonal on every disk $D_{r}$: for every $r>0$,%
\begin{equation}
\int_{D_{r}}e_{n}(z)\overline{e_{m}(z)}e^{-\pi \frac{\left\vert z\right\vert
^{2}}{2}}dz= C(r,m) \, \delta _{nm}\text{.}  \label{ortdisk}
\end{equation}%
The normalization constant $C(r,m)$ depends on $r$ and $m$ and satisfies $%
\lim _{r\to \infty } C(r,m) = 1$. The double orthogonality follows easily by
writing the integral in polar coordinates (see \cite[pg. 54]{Charly}). It
will be the key fact behind the proof of the next result.

\begin{proposition}
\label{densityhm} The sequence of polyanalytic orthogonal polynomials
defined as 
\begin{equation*}
e_{k,m}(z)=e^{\pi \left\vert z\right\vert ^{2}}\left( \frac{d}{dz}\right)
^{k}\left[ e^{-\pi \left\vert z\right\vert ^{2}}e_{m}(z)\right] \text{,}
\end{equation*}%
enjoy the following properties:

\begin{enumerate}
\item The linear span of $\{e_{k,m} : m\geq 0,0\leq k\leq n\}$ is dense in $%
\mathbf{F}_{p}^{n+1}(%
\mathbb{C}
)$ for $1\leq p < \infty $ (and weak-$^*$ dense in $\mathbf{F} ^{n+1}_\infty
(\bC )$.

\item For fixed $n$ the linear span of $\{e_{n,m}: m\geq 0 \}$ is dense in $%
\mathcal{F}_{p}^{n+1}(%
\mathbb{C}
)$ for $1\leq p < \infty $.

\item $\mathcal{B}^{k}\mathcal{(}h_{m})\mathcal{=}\left( \frac{\pi ^{m}}{m!}%
\right) ^{\frac{1}{2}}e_{k,m}.$
\end{enumerate}
\end{proposition}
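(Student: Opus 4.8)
The plan is to dispatch the three assertions in the order (3), (2), (1), since the algebraic identity in (3) produces the formula that drives the rest. Assertion (3) is a one-line verification: substituting $f=h_m$ into \eqref{polyBargmann}/\eqref{eq:hmhm} and using the classical identity $\mathcal{B}h_m=e_m$, the right-hand side becomes exactly $e_{k,m}$ up to the normalizing constant. It is convenient to record here the ladder operator $A:=\partial_z-\pi\overline z$: the telescoping identity $e^{\pi|z|^2}\left(\tfrac{d}{dz}\right)^k e^{-\pi|z|^2}=\left(\tfrac{d}{dz}-\pi\overline z\right)^k$ shows that $e_{k,m}=A^k e_m$, so that $e_{k,m}$ is, up to a constant, the image of $h_m$ under the true polyanalytic Bargmann transform of the corresponding order. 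In particular, for $p=2$ the isometry of that transform together with the orthonormality of $\{h_m\}$ already shows that $\{e_{n,m}\}_m$ is an orthogonal basis of $\cF^{n+1}_2(\bC)$.

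The technical heart is the orthogonality relation $\langle e_{k,m},e_{k',m'}\rangle_{\cF_2}=\gamma_k\,\delta_{kk'}\delta_{mm'}$ with $\gamma_k>0$, which I would obtain on a disc $\mathbf D_r$ and then let $r\to\infty$. Writing $\overline{e_{k',m'}}\,e^{-\pi|z|^2}=\left(\tfrac{d}{d\overline z}\right)^{k'}\!\big[e^{-\pi|z|^2}\overline{e_{m'}}\big]$ and applying the iterated integration-by-parts formula \eqref{Green_n} moves all $\overline z$-derivatives onto $e_{k,m}$. Since $e_{k,m}$ is polyanalytic of order $k+1$, the interior term vanishes identically when $k'>k$, and the commutation relation $[\partial_{\overline z},A]=-\pi$ gives $\left(\tfrac{d}{d\overline z}\right)^k e_{k,m}=(-\pi)^k k!\,e_m$ when $k'=k$, reducing the interior integral to $\int e_m\overline{e_{m'}}e^{-\pi|z|^2}$. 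This is precisely where the double orthogonality \eqref{ortdisk} enters: the monomials $e_m$ are orthogonal not only over $\bC$ but over every disc and, by angular integration, over every circle $\delta\mathbf D_r$. Hence for $m\neq m'$ both the interior cross term and every boundary line integral in \eqref{Green_n} vanish on each $\mathbf D_r$, while for $k\neq k'$ the surviving boundary terms carry the Gaussian factor $e^{-\pi|z|^2}$ and die as $r\to\infty$; collecting the diagonal contribution yields $\gamma_k=\pi^k k!$.

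With orthogonality available I would prove (2) by a duality/annihilator argument. For $1\le p<\infty$ identify the dual of $\cF^{n+1}_p(\bC)$ with $\cF^{n+1}_{p'}(\bC)$ under the pairing $\langle F,G\rangle=\int F\overline G\,e^{-\pi|z|^2}$ (supplied by the $L^p$-theory of Section~3). If $G=A^nH\in\cF^{n+1}_{p'}$ annihilates every $e_{n,m}$, the same integration by parts gives $\langle e_{n,m},G\rangle=\pi^n n!\,\langle e_m,H\rangle_{\cF_2}$, so all Taylor coefficients of $H$ vanish, whence $H\equiv0$ and $G=0$; by Hahn--Banach the span of $\{e_{n,m}\}_m$ is dense in $\cF^{n+1}_p$. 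For $p=\infty$ the identical computation against the predual $\cF^{n+1}_1$ gives weak-$*$ density. Assertion (1) then follows by combining (2) over $0\le k\le n$ with the orthogonal decomposition \eqref{orthogonal}: in the $L^p$ setting $\mathbf F^{n+1}_p=\bigoplus_{k=0}^{n}\cF^{k+1}_p$ as a topological direct sum via the projections $P^k$, each $\{e_{k,m}\}_m$ is dense in its summand $\cF^{k+1}_p$ by (2), and the union is therefore dense (weak-$*$ dense when $p=\infty$) in $\mathbf F^{n+1}_p$.

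The genuinely nontrivial point is everything that is automatic for $p=2$ but not for general $p$: the identification $(\cF^{n+1}_p)^*\cong\cF^{n+1}_{p'}$, the $\mathcal{L}_p$-boundedness of the projections $P^k$ realizing \eqref{orthogonal}, and the justification of the integration by parts and the vanishing of the boundary terms for functions controlled only in the $\mathcal{L}_p$ norm rather than pointwise. I expect all of these to come from the modulation-space picture of Section~3, under which $\mathcal{B}$ intertwines $\cF_p$ with $M^p(\bR)$ and the operators $A^n$ and $P^k$ correspond to bounded short-time Fourier transforms with Hermite windows; the remaining care is then simply to push the density of Taylor polynomials in $\cF_p$ through these bounded maps.
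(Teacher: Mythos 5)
Your item (3) and the computational engine you propose --- the iterated integration by parts \eqref{Green_n} on disks, the double orthogonality \eqref{ortdisk}, and the limit $r\to\infty$ --- are exactly what drives the paper's proof, and your ladder-operator bookkeeping ($[\partial_{\bar z},A]=-\pi$, hence $(\tfrac{d}{d\bar z})^k e_{k,m}=(-\pi)^k k!\,e_m$) is a tidy repackaging of the Leibniz expansions the paper writes out by hand. The problem is how you convert this computation into density. For item (2) you assume the duality $(\cF^{n+1}_p)^*\cong\cF^{n+1}_{p'}$ under the Fock pairing and attribute it to ``the $L^p$-theory of Section~3''; no such duality theorem appears in Section~3 or anywhere in the paper, and it is not routine: it needs the boundedness of the projection $P^n$ on $\cL_{p'}$ \emph{and} the reproducing property $P^nG=G$ on $\cF^{n+1}_{p'}$. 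For item (1) you assume that $\mathbf{F}^{n+1}_p=\cF^1_p\oplus\cdots\oplus\cF^{n+1}_p$ is a topological direct sum realized by bounded projections $P^k$. Inside this paper both facts sit strictly downstream of Proposition~\ref{densityhm}: the $L^p$ decomposition is Corollary~1, stated as a consequence of this very proposition, and Proposition~3 proves $P^nF=F$ on $\cF^{n+1}_p$ precisely by extending from $\cF^{n+1}_2\cap\cF^{n+1}_p$ using the density you are trying to establish. So your argument is circular as a proof of this proposition, and read as a standalone argument it has a hole exactly where your last paragraph admits one: deferring the duality and the bounded projections to an expectation about the modulation-space picture is deferring the actual content of the proof.

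The repair is the paper's own route, which needs neither the duality theorem nor the decomposition: test an annihilating element of the space itself against the $e_{k,m}$. For (1), if $F\in\mathbf{F}^{n+1}_p$ satisfies $\langle F,e_{k,m}\rangle=0$ for all $0\le k\le n$ and $m\ge0$, write $F(z)=\sum_{q}\bar z^{\,q}\varphi_q(z)$ with $\varphi_q$ entire as in \eqref{polypolynomial}, move the $\bar z$-derivatives onto $F$ via \eqref{Green_n}, let $r\to\infty$ to kill the boundary terms, and apply \eqref{ortdisk}; the hypotheses then become a triangular linear system in the Taylor coefficients of the $\varphi_q$, forcing $F=0$. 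For (2) the same computation, run on the representation $F=(\pi^n/n!)^{1/2}e^{-\pi|z|^2}(\tfrac{d}{dz})^n[e^{-\pi|z|^2}H]$, annihilates the Taylor coefficients of $H$ one by one. (The paper is itself terse about the functional-analytic step from ``no element of the space annihilates all $e_{k,m}$'' to norm density, but at least it invokes nothing proved later in the paper.) One smaller inaccuracy: your claim that for $m\neq m'$ every boundary line integral vanishes on each finite disk is false when $k\neq k'$ and $m-m'=k-k'$, since the angular charge count then permits a nonzero term; those terms disappear only in the limit $r\to\infty$ through the Gaussian factor, so your orthogonality relation survives, but the intermediate assertion does not.
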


\begin{proof}
To prove completeness of $\{e_{k,m}\}$ in $\mathbf{F}_{p}^{n}(%
\mathbb{C}
)$, suppose that $F\in \mathbf{F}_{p}^{n}(%
\mathbb{C}
)$ is such that $\left\langle F,e_{k,m}\right\rangle _{\mathcal{L}_{p}(%
\mathbb{C}
)}=0$,\ for all\ $0\leq k\leq n-1$ and\ \ $m\geq 0.$ For $n=1$, $k=0$, the
classical argument for the completeness of the monomials shows that $F=0$.
For $k\geq 1$, we use formula (\ref{Green_n}) as follows: 
\begin{eqnarray*}
&&\int_{\mathbf{D}_{r}}F(z)\overline{e_{k,m}(z)}e^{-\pi \left\vert
z\right\vert ^{2}}dz=\int_{\mathbf{D}_{r}}F(z)e^{\pi \left\vert z\right\vert
^{2}}\left( \frac{d}{d\overline{z}}\right) ^{k}\left[ e^{-\pi \left\vert
z\right\vert ^{2}}\overline{e_{m}(z)}\right] e^{-\pi \left\vert z\right\vert
^{2}}dz \\
&=&\int_{\mathbf{D}_{r}}F(z)\left( \frac{d}{d\overline{z}}\right) ^{k}\left[
e^{-\pi \left\vert z\right\vert ^{2}}\overline{e_{m}(z)}\right] dz \\
&=&(-1)^{k}\int_{\mathbf{D}_{r}}\left( \frac{d}{d\overline{z}}\right)
^{k}F(z)\left[ e^{-\pi \left\vert z\right\vert ^{2}}\overline{e_{m}(z)}%
\right] dz \\
&&+\frac{1}{i}\sum_{j=0}^{n-1}(-1)^{j}\int_{\delta \mathbf{D}_{r}}\left( 
\frac{d}{d\overline{z}}\right) ^{j}F(z)\left( \frac{d}{d\overline{z}}\right)
^{k-j-1}\left[ e^{-\pi \left\vert z\right\vert ^{2}}\overline{e_{m}(z)}%
\right] dz\text{.}
\end{eqnarray*}%
Now, using the representation (\ref{polypolynomial}) we can write the
polyanalytic function $F$ in the form:%
\begin{equation*}
F(z)=\sum_{0\leq p\leq n-1}\overline{z}^{p}\sum_{l\geq 0}c_{l,p}z^{l}\text{.}
\end{equation*}%
Since the sum converges uniformly over compact set, we interchange the order
of summation and integration in the following manipulations. 
\begin{eqnarray*}
&&\int_{\delta \mathbf{D}_{r}}\left( \frac{d}{d\overline{z}}\right)
^{j}F(z)\left( \frac{d}{d\overline{z}}\right) ^{k-j-1}\left[ e^{-\pi
\left\vert z\right\vert ^{2}}\overline{e_{m}(z)}\right] dz \\
&=&\int_{\delta \mathbf{D}_{r}}\sum_{j\leq p\leq n-1}p...(p-j+1)\overline{z}%
^{p-j}\sum_{l\geq 0}c_{l,p}z^{l}\left( \frac{d}{d\overline{z}}\right)
^{k-j-1}\left[ e^{-\pi \left\vert z\right\vert ^{2}}\overline{e_{m}(z)}%
\right] dz \\
&=&\sum_{j\leq p\leq n-1}p...(p-j+1)\overline{z}^{p-j}\sum_{l\geq
0}c_{l,p}\,\int_{\delta \mathbf{D}_{r}}\overline{z}^{p-k}z^{j}\left( \frac{d%
}{d\overline{z}}\right) ^{k-j-1}\left[ e^{-\pi \left\vert z\right\vert ^{2}}%
\overline{e_{m}(z)}\right] dz
\end{eqnarray*}%
If we let $r\rightarrow \infty $, the integral on the last expression
approaches zero and since the function  $\varphi _{p}(z)=\sum_{l\geq
0}c_{l,p}z^{l}$ is analytic, the coefficients $\{c_{l,p}\}_{l\geq 0}$ decay
fast enough in order to assure that the whole expression approaches zero.
Thus,%
\begin{eqnarray*}
\int_{\mathbf{D}_{r}}F(z)\overline{e_{k,m}(z)}e^{-\pi \left\vert
z\right\vert ^{2}}dz &=&(-1)^{k}\int_{\mathbf{D}_{r}}\left( \frac{d}{d%
\overline{z}}\right) ^{k}F(z)\left[ e^{-\pi \left\vert z\right\vert ^{2}}%
\overline{e_{m}(z)}\right] dz \\
&=&(-1)^{k}\sum_{k\leq p\leq n}\sum_{j\geq 0}c_{j,p}\frac{p...(p-k+1)\pi
^{m/2}}{\sqrt{m!}}\int_{\mathbf{D}_{r}}z^{j}\overline{z}^{m+p-k}e^{-\pi
\left\vert z\right\vert ^{2}}dz.
\end{eqnarray*}%
We first use \ (\ref{ortdisk})\ and then let $r\rightarrow \infty $. The
hypothesis $0=\left\langle F,e_{k,m}\right\rangle _{\mathcal{F}(%
\mathbb{C}
^{d})}$ for $0\leq k\leq n$ implies that 
\begin{equation*}
\sum_{k\leq p\leq n}\frac{p...(p-k+1)(p+m-k)!}{\pi ^{\frac{3}{2}+p-k}\sqrt{m!%
}}c_{m+p-k,p}=0\text{, \ \ }m\geq 0\text{, }0\leq k\leq n\text{.}
\end{equation*}
Solving the resulting triangular system for each $m$, we obtain $c_{j,p}=0$
for $k\leq p\leq n-1$ and $j\geq 0$. Therefore $F=0$.

To prove item (2), suppose now that $F\in \mathcal{F}_{p}^{n+1}(%
\mathbb{C}
)$. Then there exists an entire function $H(z)=\sum_{j\geq 0}a_{j}z^{j}$
such that 
\begin{equation*}
F(z)=\left( \frac{\pi ^{n}}{n!}\right) ^{\frac{1}{2}}e^{-\pi \left\vert
z\right\vert ^{2}}\left( \frac{d}{dz}\right) ^{n}\left[ e^{-\pi \left\vert
z\right\vert ^{2}}H(z)\right] =\left( \frac{\pi ^{n}}{n!}\right) ^{\frac{1}{2%
}}\sum_{0\leq k\leq n}\binom{n}{k}\left( -\pi \overline{z}\right) ^{k}\left( 
\frac{d}{dz}\right) ^{n-k}H(z).
\end{equation*}%
We apply Lemma~\eqref{intpart} and denote by $B(r)$ the boundary terms
arising from (\ref{Green_n}). Then 
\begin{eqnarray*}
\int_{\mathbf{D}_{r}}F(z)\overline{e_{n,m}(z)}e^{-\pi \left\vert
z\right\vert ^{2}}dz &=&\int_{\mathbf{D}_{r}}F(z)\left( \frac{d}{d\overline{z%
}}\right) ^{n}\left[ e^{-\pi \left\vert z\right\vert ^{2}}\overline{e_{m}(z)}%
\right] dz+B(r) \\
&=&(-1)^{n}\int_{\mathbf{D}_{r}}\left( \frac{d}{d\overline{z}}\right)
^{n}F(z)\overline{e_{m}(z)}e^{-\pi \left\vert z\right\vert ^{2}}dz+B(r) \\
&=&(-1)^{n}\int_{\mathbf{D}_{r}}H(z)\overline{e_{m}(z)}e^{-\pi \left\vert
z\right\vert ^{2}}dz+B(r) \\
&=&\sum_{j\geq 0}a_{j}\int_{\mathbf{D}_{r}}z^{j}\overline{e_{m}(z)}e^{-\pi
\left\vert z\right\vert ^{2}}dz+B(r) \\
&=&C(r,m)a_{m}+B(r)\,,
\end{eqnarray*}%
where we have used (\ref{ortdisk}) in the last equality. If $r\rightarrow
\infty $, then $B(r)\rightarrow 0$, and the hypothesis $0=\left\langle
F,e_{n,m}\right\rangle _{\mathcal{F}(%
\mathbb{C}
)},m\geq 0$\ now implies that $a_{j}=0,j\geq 0$. Thus $F=0$.

Assertion (3) of the proposition is an immediate consequence of the
definition of the true polyanalytic Bargmann transform and the fact that $%
\mathcal{B(}h_{m})\mathcal{=}e_{m}$ (see also \cite{Abreusampling}).
\end{proof}

An obvious consequence of the above proposition is the extension of the
orthogonal decomposition (\ref{orthogonal}) to the $p$-norm setting. Similar
results appear in \cite{Ramazanov} for the unit disk case. See also \cite%
{Begehr} for other approaches to decompositions in spaces of polyanalytic
functions.

\begin{corollary}
The following decompositions hold for $1<p<\infty $: 
\begin{eqnarray*}
\mathbf{F}_{p}^{n}(%
\mathbb{C}
) &=&\mathcal{F}_{p}^{1}(%
\mathbb{C}
)\oplus ...\oplus \mathcal{F}_{p}^{n}(%
\mathbb{C}
). \\
\mathcal{L}_{p}(%
\mathbb{C}
) &=&\bigoplus_{n=1}^{\infty }\mathcal{F}_{p}^{n}(%
\mathbb{C}
).
\end{eqnarray*}
\end{corollary}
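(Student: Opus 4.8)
The plan is to realize both decompositions through bounded idempotents on $\cL_p(\bC)$ that extend the orthogonal projections $P^{k}\colon \cL_2(\bC)\to \cF^{k}(\bC)$ coming from the Hilbert-space decomposition~\eqref{orthogonal}. Once each $P^{k}$ is known to extend to a bounded projection of $\cL_p(\bC)$ onto $\cF_p^{k}(\bC)$ for $1<p<\infty$, with $P^{j}P^{k}=\delta_{jk}P^{k}$, both claims reduce to the two identities $\sum_{k=1}^{n}P^{k}=\Id$ on $\mathbf{F}_p^{n}(\bC)$ and $\sum_{k=1}^{\infty}P^{k}=\Id$ on $\cL_p(\bC)$. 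So the whole matter is the $\cL_p$-boundedness of these projections together with a convergence statement.

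To obtain boundedness I would pass to the unweighted picture via the isometric isomorphism $U\colon \cL_p(\bC)\to L^{p}(\bC,dz)$, $UF(z)=F(z)e^{-\pi|z|^{2}/2}$, which exactly matches the $\cL_p$-norm (and for $p=2$ identifies $\cL_2(\bC)$ with the Fock space~\eqref{Focknorm}). On the $L^{2}$ side $P^{k}$ is the integral operator against the reproducing kernel of $\cF^{k}(\bC)$, whose explicit form was announced after~\eqref{polyanalyticBargmann}; this kernel equals $e^{\pi z\overline{w}}$ times a polynomial of degree $k-1$ in $|z-w|^{2}$ (a Laguerre polynomial). Defining $P^{k}$ on $\cL_p$ by the same integral and conjugating by $U$, the kernel of $UP^{k}U^{-1}$ has modulus $|L_{k-1}(\pi|z-w|^{2})|\,e^{-\frac{\pi}{2}|z-w|^{2}}$, a function of $|z-w|$ alone with Gaussian decay. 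Hence $UP^{k}U^{-1}$ is dominated by convolution with an $L^{1}(\bC)$ kernel and is bounded on $L^{p}(\bC)$ for every $1\le p\le\infty$ by Young's inequality (or Schur's test); transporting back, $P^{k}$ is bounded on $\cL_p(\bC)$.

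For the finite decomposition, the relations $P^{j}P^{k}=\delta_{jk}P^{k}$ hold on the dense span of the $e_{k,m}$ by the disk-orthogonality~\eqref{ortdisk} and the $L^{2}$ decomposition, and then extend by boundedness and density. Proposition~\ref{densityhm}(2) identifies the range of $P^{k}$ on $\cL_p$ as $\cF_p^{k}(\bC)$ (the closed linear span of $\{e_{k-1,m}\}_{m\ge0}$), while Proposition~\ref{densityhm}(1) shows that $\sum_{k=1}^{n}P^{k}$ acts as the identity on a set spanning a dense subspace of $\mathbf{F}_p^{n}(\bC)$, whence $\sum_{k=1}^{n}P^{k}=\Id$ there. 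Bounded idempotents with $P^{j}P^{k}=\delta_{jk}P^{k}$ summing to $\Id$ are exactly a topological direct sum, giving $\mathbf{F}_p^{n}(\bC)=\cF_p^{1}(\bC)\oplus\cdots\oplus\cF_p^{n}(\bC)$; in particular the intersection of any summand with the sum of the others is $\{0\}$.

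For $\cL_p(\bC)=\bigoplus_{n\ge1}\cF_p^{n}(\bC)$ I would show $S_{N}:=\sum_{n=1}^{N}P^{n}\to\Id$ strongly. On the dense subspace of polyanalytic polynomials (finite combinations of $z^{j}\overline{z}^{l}$) the partial sums stabilize to the identity once $N$ exceeds the order, so it suffices to bound $\sup_{N}\|S_{N}\|_{\cL_p\to\cL_p}$. Here $S_{N}$ is the reproducing-kernel projection onto $\mathbf{F}_p^{N}(\bC)$, and by $\sum_{n=0}^{N-1}L_{n}=L_{N-1}^{(1)}$ its conjugated kernel has modulus $|L_{N-1}^{(1)}(\pi|z-w|^{2})|\,e^{-\frac{\pi}{2}|z-w|^{2}}$. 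The crux — and the step I expect to be the real obstacle — is the uniform-in-$N$ control of this kernel: one must bound its $L^{1}(\bC)$-norm, i.e.\ show $\int_{0}^{\infty}|L_{N-1}^{(1)}(u)|\,e^{-u/2}\,du$ is bounded in $N$ (note that $e^{-u/2}$ is not the Laguerre orthogonality weight), or else replace the crude Young estimate by a duality/interpolation argument that exploits the reflexivity of $L^{p}$ for $1<p<\infty$ together with the unconditional orthogonal $L^{2}$ decomposition. I expect this uniform Laguerre-kernel estimate, rather than any of the algebraic bookkeeping, to be where the restriction $1<p<\infty$ is genuinely used.
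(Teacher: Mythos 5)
Your proposal splits into two halves of very different status.

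The finite decomposition is handled correctly, and by essentially the paper's own machinery: the bounded extension of $P^{k}$ to $\mathcal{L}_{p}(\mathbb{C})$ via the explicit kernel, the observation that its modulus in the unweighted picture is $|L_{k-1}(\pi|z-w|^{2})|\,e^{-\pi|z-w|^{2}/2}$, and the Young/convolution bound are precisely what the paper proves (for a single projection) in Proposition~3, and the density input is Proposition~1. In fact the paper offers no written proof of the corollary at all --- it is asserted as an ``obvious consequence'' of the density proposition --- so for the first identity your argument is, if anything, more complete than the paper's, and the restriction to $p>1$ is not even needed there ($1\le p<\infty$ suffices).

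The genuine gap is the infinite decomposition, and it is worse than you suspect: the uniform kernel estimate you single out as the crux is \emph{false}. By the classical Askey--Wainger/Erd\'elyi asymptotics, in the oscillatory region $1/N\lesssim u\lesssim N$ one has $|L_{N}^{(1)}(u)|\,e^{-u/2}\asymp N^{1/4}u^{-3/4}$, whence $\int_{0}^{\infty}|L_{N}^{(1)}(u)|\,e^{-u/2}\,du\asymp N^{1/2}\to\infty$. Consequently no argument that estimates the partial-sum kernel in absolute value (Young's inequality, Schur's test) can produce $\sup_{N}\|S_{N}\|_{\mathcal{L}_{p}\to\mathcal{L}_{p}}<\infty$; the cancellation in the kernel is essential. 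The uniform $L^{p}$-boundedness of these partial sums for $1<p<\infty$ in one complex dimension is a genuinely deep fact: under the map $F\mapsto Fe^{-\pi|z|^{2}/2}$ the spaces $\mathcal{F}^{n}(\mathbb{C})$ become the Landau-level eigenspaces, and the statement is the mean-convergence theorem for special Hermite expansions (Thangavelu), proved with oscillatory-integral/restriction-type estimates --- this is exactly where $1<p<\infty$ enters, confirming your suspicion about the role of that hypothesis but ruling out your primary route, while your fallback (``duality/interpolation'') is not developed enough to substitute for it. Alternatively, one can read the second identity in the weaker sense the paper evidently intends --- every element of the dense algebraic sum decomposes uniquely, with each $P^{n}$ bounded --- which does follow from Propositions~1 and~3; but as the Schauder-decomposition statement your strong-convergence plan aims at, your proof is incomplete and cannot be closed by the kernel bound you propose.
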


\subsection{\protect\bigskip Mapping properties of the true polyanalytic
Bargmann transform in modulation spaces}

For the investigation of the mapping properties of the true polyanalytic
Bargmann transform $\mathcal{F}_{p}^{n}(%
\mathbb{C}
)$ we need the concept of \emph{modulation space}. Following \cite{Charly},
the modulation space $M^{p}(%
\mathbb{R}
),$ $1\leq p\leq \infty ,$ consists of all tempered distributions $f$ such
that $V_{h_{0}}f\in L^{p}(%
\mathbb{R}
^{2})$ equipped with the norm%
\begin{equation*}
\left\Vert f\right\Vert _{M^{p}(%
\mathbb{R}
)}=\left\Vert V_{h_{0}}f\right\Vert _{L^{p}(%
\mathbb{R}
^{2})}\text{.}
\end{equation*}%
Modulation spaces are ubiquitous in time-frequency analysis. They were
introduced by Feichtinger in \cite{FeiModulation}.

With a view to studying sampling sequences in poly-Fock spaces $\mathcal{F}%
_{p}^{n}(%
\mathbb{C}
)$\ for general $p$, we prove some statements concerning the properties of
the true poly-Bargmann transform, which may be of independent interest.

\begin{proposition}
There exist constants $C,D$, such that, for every $f\in M^{p}(%
\mathbb{R}
)$, $1\leq p \leq \infty $, 
\begin{equation}
C\left\Vert \mathcal{B}^{n}f\right\Vert _{\mathcal{L}_{p}(%
\mathbb{C}
)}\leq \left\Vert \mathcal{B}f\right\Vert _{\mathcal{L}_{p}(%
\mathbb{C}
)}\leq D\left\Vert \mathcal{B}^{n}f\right\Vert _{\mathcal{L}_{p}(%
\mathbb{C}
)}\text{.}  \label{constants}
\end{equation}
\end{proposition}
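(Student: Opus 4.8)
The plan is to reduce the claim to the window-independence of the modulation space norm. First I would note that the exponential prefactor in \eqref{polyBargmann} has modulus $e^{\pi |z|^2/2}$, so that, writing $z=x+i\xi$,
\[
|\mathcal{B}f(z)|\,e^{-\pi |z|^2/2}=|V_{h_0}f(x,-\xi)|,\qquad |\mathcal{B}^{n}f(z)|\,e^{-\pi |z|^2/2}=|V_{h_{n-1}}f(x,-\xi)|,
\]
since $\mathcal{B}^{n}$ is built from the Hermite window $h_{n-1}$. Expressing $\|\cdot\|_{\mathcal{L}_p}$ in the coordinates $(x,\xi)$ and substituting $\xi\mapsto-\xi$ then gives
\[
\|\mathcal{B}f\|_{\mathcal{L}_p}=\|V_{h_0}f\|_{L^p(\bR^2)},\qquad \|\mathcal{B}^{n}f\|_{\mathcal{L}_p}=\|V_{h_{n-1}}f\|_{L^p(\bR^2)},
\]
for every $1\le p\le\infty$ (with the supremum reading when $p=\infty$). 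Thus \eqref{constants} is equivalent to the two-sided bound $C\,\|V_{h_{n-1}}f\|_{L^p}\le\|V_{h_0}f\|_{L^p}\le D\,\|V_{h_{n-1}}f\|_{L^p}$, i.e.\ to the statement that the $M^p$-norm does not depend on whether one uses $h_0$ or $h_{n-1}$ as analyzing window.

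To obtain both estimates I would invoke the change-of-window inequality for the short-time Fourier transform (see \cite{Charly}): whenever $\langle\gamma,h\rangle\neq0$ one has the pointwise majorization
\[
|V_g f(w)|\le\frac{1}{|\langle\gamma,h\rangle|}\,\bigl(|V_h f|\ast|V_g\gamma|\bigr)(w).
\]
Applying this with $g=h_{n-1}$, $h=\gamma=h_0$ and recalling $\langle h_0,h_0\rangle=\|h_0\|_2^2=1$ yields $|V_{h_{n-1}}f|\le|V_{h_0}f|\ast|V_{h_{n-1}}h_0|$; applying it with $g=h_0$, $h=\gamma=h_{n-1}$ yields $|V_{h_0}f|\le|V_{h_{n-1}}f|\ast|V_{h_0}h_{n-1}|$. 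Young's inequality $\|u\ast v\|_{L^p}\le\|u\|_{L^p}\|v\|_{L^1}$, valid for all $1\le p\le\infty$, then gives
\[
\|V_{h_{n-1}}f\|_{L^p}\le\|V_{h_{n-1}}h_0\|_{L^1}\,\|V_{h_0}f\|_{L^p},\qquad \|V_{h_0}f\|_{L^p}\le\|V_{h_0}h_{n-1}\|_{L^1}\,\|V_{h_{n-1}}f\|_{L^p},
\]
so that one may take $D=\|V_{h_0}h_{n-1}\|_{L^1}$ and $C=\|V_{h_{n-1}}h_0\|_{L^1}^{-1}$, with constants independent of both $f$ and $p$.

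It remains to check that these constants are finite, and this is the only quantitative input. Since the Hermite functions $h_0$ and $h_{n-1}$ lie in the Schwartz class $\mathcal{S}(\bR)$, their cross short-time Fourier transforms $V_{h_0}h_{n-1}$ and $V_{h_{n-1}}h_0$ again belong to $\mathcal{S}(\bR^2)$ --- each is explicitly a polynomial times a Gaussian --- and in particular both are in $L^1(\bR^2)$. This is precisely the point at which the plan uses that the windows are Hermite functions rather than arbitrary elements of $L^2(\bR)$: for a general window the convolution kernel need not be integrable, and the norms could fail to be equivalent. The only mild obstacle is the justification of the reproducing formula behind the change-of-window inequality when $f$ is a tempered distribution in $M^p$ rather than an $L^2$-function; this is standard and is established for all $1\le p\le\infty$ in \cite{Charly}, so the estimates above hold verbatim on $M^p$.
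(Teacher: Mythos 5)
Your proof is correct and follows essentially the same route as the paper's: both reduce \eqref{constants} to the window-independence of the modulation space norm, which the paper simply cites as \cite[Proposition~11.3.1]{Charly}, while you unpack that proposition's standard proof (the change-of-window majorization $|V_g f|\le |\langle \gamma ,h\rangle |^{-1}|V_h f|\ast |V_g \gamma |$ plus Young's inequality, with the integrability of the cross-STFTs of Hermite functions as the quantitative input). You are in fact slightly more careful than the paper on one point: by \eqref{polyBargmann} the window attached to $\mathcal{B}^{n}$ is $h_{n-1}$, whereas the paper's proof loosely writes $V_{h_n}$; this off-by-one slip is harmless but your indexing is the consistent one.
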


\begin{proof}
This follows from the theory of modulation spaces: since the definition of
Modulation space is independent of the particular window chosen \cite[%
Proposition 11.3.1]{Charly}, then the norms%
\begin{equation*}
\left\Vert f\right\Vert ^{\prime}_{M^{p}(%
\mathbb{R}
^{2})}=\left\Vert V_{h_{n}}f\right\Vert _{L^{p}(%
\mathbb{R}
^{2})}
\end{equation*}%
and%
\begin{equation*}
\left\Vert f\right\Vert _{M^{p}(%
\mathbb{R}
^{2})}=\left\Vert V_{h_{0}}f\right\Vert _{L^{p}(%
\mathbb{R}
^{2})}\text{,}
\end{equation*}%
are equivalent. Therefore, there exist constants $C,D$, such that%
\begin{equation*}
C\left\Vert V_{h_{n}}f\right\Vert _{L^{p}(%
\mathbb{R}
^{2})}\leq \left\Vert V_{h_{0}}f\right\Vert _{\mathcal{F}_{p}(%
\mathbb{C}
)}\leq D\left\Vert V_{h_{n}}f\right\Vert _{L^{p}(%
\mathbb{R}
^{2})}\text{.}
\end{equation*}%
By definition of $\mathcal{B}^{n}$ and $\mathcal{B}$, this yields (\ref%
{constants}).
\end{proof}

The next result includes the surjectivity of the transform $\mathcal{B}^{n}$
onto $\mathcal{F}_{p}^{n}(%
\mathbb{C}
).$

\begin{corollary}
Given $F\in \mathcal{F}_{p}^{n}(%
\mathbb{C}
)$ there exists $f\in M^{p}(%
\mathbb{R}
)$ such that $F=\mathcal{B}^{n}f$. Moreover, there exist constants $C,D$
such that: 
\begin{equation}
C\left\Vert F\right\Vert _{\mathcal{L}_{p}(%
\mathbb{C}
)}\leq \left\Vert \mathcal{B}f\right\Vert _{\mathcal{L}_{p}(%
\mathbb{C}
)}\leq D\left\Vert F\right\Vert _{\mathcal{L}_{p}(%
\mathbb{C}
)}.  \label{constants_2}
\end{equation}
\end{corollary}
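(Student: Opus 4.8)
The plan is to obtain surjectivity by a closed-range argument and then read off the norm estimate \eqref{constants_2} directly from the preceding Proposition applied to the chosen preimage. First I would record that $\mathcal{B}^{n}$ maps $M^{p}(\bR)$ into $\mathcal{F}_{p}^{n}(\bC)$ and is bounded below there. From the definition \eqref{polyBargmann} one has $\mathcal{B}^{n}f(z)=e^{-i\pi x\xi+\frac{\pi}{2}|z|^{2}}V_{h_{n-1}}f(x,-\xi)$, so $|\mathcal{B}^{n}f(z)|\,e^{-\pi|z|^{2}/2}=|V_{h_{n-1}}f(x,-\xi)|$ and hence $\|\mathcal{B}^{n}f\|_{\mathcal{L}_{p}(\bC)}=\|V_{h_{n-1}}f\|_{L^{p}(\bR^{2})}$; likewise \eqref{Bargmann} gives $\|\mathcal{B}f\|_{\mathcal{L}_{p}(\bC)}=\|V_{h_{0}}f\|_{L^{p}(\bR^{2})}=\|f\|_{M^{p}(\bR)}$. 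Combining these two identities with \eqref{constants} yields two-sided bounds $c_{1}\|f\|_{M^{p}(\bR)}\leq\|\mathcal{B}^{n}f\|_{\mathcal{L}_{p}(\bC)}\leq c_{2}\|f\|_{M^{p}(\bR)}$. Consequently $\mathcal{B}^{n}$ is an isomorphism of $M^{p}(\bR)$ onto its image $R:=\mathcal{B}^{n}(M^{p}(\bR))$, and in particular $R$ is a closed subspace of $\mathcal{L}_{p}(\bC)$.

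Next I would establish that $R$ is dense in $\mathcal{F}_{p}^{n}(\bC)$. Since the Hermite functions $h_{m}$ lie in the Schwartz class and therefore in $M^{p}(\bR)$ for every $1\leq p\leq\infty$, Proposition~\ref{densityhm}(3) shows that $\mathcal{B}^{n}h_{m}$ is a nonzero constant multiple of $e_{n-1,m}$, so $R$ contains every $e_{n-1,m}$, $m\geq 0$. By Proposition~\ref{densityhm}(2) the linear span of $\{e_{n-1,m}:m\geq 0\}$ is dense in $\mathcal{F}_{p}^{n}(\bC)$ for $1\leq p<\infty$, whence $R$ is dense in $\mathcal{F}_{p}^{n}(\bC)$.

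The two facts together close the argument. Because $R$ is closed in $\mathcal{L}_{p}(\bC)$ and contained in $\mathcal{F}_{p}^{n}(\bC)$, its closure in the subspace topology of $\mathcal{F}_{p}^{n}(\bC)$ inherited from $\mathcal{L}_{p}(\bC)$ is again $R$; but by density this closure is all of $\mathcal{F}_{p}^{n}(\bC)$. Therefore $R=\mathcal{F}_{p}^{n}(\bC)$, which is precisely the statement that every $F\in\mathcal{F}_{p}^{n}(\bC)$ admits a representation $F=\mathcal{B}^{n}f$ with $f\in M^{p}(\bR)$. Finally, applying the preceding Proposition to this particular $f$ and substituting $F=\mathcal{B}^{n}f$ into \eqref{constants} gives exactly \eqref{constants_2}.

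The main obstacle is the closed-range step: everything hinges on the two-sided equivalence $\|\mathcal{B}^{n}f\|_{\mathcal{L}_{p}(\bC)}\asymp\|f\|_{M^{p}(\bR)}$, which is really the window-independence of the modulation-space norm repackaged through \eqref{polyBargmann}; the density input from Proposition~\ref{densityhm} is then what upgrades ``closed range'' to ``full range.'' For $1\leq p<\infty$ this scheme is complete; the case $p=\infty$ would require replacing norm density by a weak-$^{*}$ density statement in $\mathcal{F}_{\infty}^{n}(\bC)$, in the spirit of Proposition~\ref{densityhm}(1).
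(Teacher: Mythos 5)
Your proposal is correct and takes essentially the same approach as the paper: density of the range of $\mathcal{B}^{n}$ via the Hermite images of the $e_{k,m}$ (Proposition~\ref{densityhm}) combined with the norm equivalence \eqref{constants}, which then transfers directly to \eqref{constants_2}. The only difference is that you spell out the closed-range step (boundedness below of $\mathcal{B}^{n}$ on $M^{p}(\mathbb{R})$, using $\Vert \mathcal{B}f\Vert _{\mathcal{L}_{p}}=\Vert f\Vert _{M^{p}}$) that the paper leaves implicit when it passes from ``the range contains a dense set'' to ``$\mathcal{B}^{n}$ is onto.''
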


\begin{proof}
Since the Hermite functions belong to $M^{p}$\ and $\mathcal{B}%
^{n}(h_{k})=e_{k,n}$, the range of $\mathcal{B}^{n}$ contains a set which is
dense in $\mathcal{F}_{p}^{n}(%
\mathbb{C}
)$ for $1\leq p < \infty$. Thus, $\mathcal{B}^{n}:L^{2}(%
\mathbb{R}
)\rightarrow \mathcal{F}_{p}^{n}(%
\mathbb{C}
)$ is onto for $1\leq p < \infty$. Then (\ref{constants}) is equivalent to (%
\ref{constants_2}).

For $p=\infty $ we use that the span of the $e_{k,m}$ is weak-$^*$ dense in $%
\cF ^n_\infty (\bC )$.
\end{proof}

\subsection{The polyanalytic projection}

Let 
\begin{equation*}
K^{n}(w,z)=\frac{1}{n!}e^{\pi \left\vert w\right\vert ^{2}}\left( \frac{d}{dw%
}\right) ^{n}\left[ e^{\pi \overline{z}w-\pi \left\vert w\right\vert
^{2}}(w-z)^{n}\right]
\end{equation*}%
denote the reproducing kernel of $\mathcal{F}_{2}^{n+1}(%
\mathbb{C}
)$ and define the integral operator $P^{n}$ by 
\begin{equation*}
(P^{n}F)(w)=\int_{%
\mathbb{C}
}F(z)K^{n}(w,z)e^{-\pi \left\vert z\right\vert ^{2}}dz\text{.}
\end{equation*}

We have shown in~\cite{Abreustructure} that $P^{n}$ is the orthogonal
projection from $\mathcal{L}_{2}(%
\mathbb{C}
)$ onto the true polyanalytic Fock space $\mathcal{F}_{2}^{n+1}$. We now
show that $P^{n}$ is also bounded on $\mathcal{L}_{p}(%
\mathbb{C}
)$ and extend the reproducing property to $\mathcal{F}_{p}^{n+1}$.

\begin{proposition}
The operator $P^{n}$ is bounded from $\mathcal{L}_{p}(%
\mathbb{C}
)$ to $\mathcal{F}_{p}^{n+1}$ for $1\leq p\leq \infty $. Moreover, if $F\in 
\mathcal{F}_{p}^{n+1}$ then $P^{n}F=F.$
\end{proposition}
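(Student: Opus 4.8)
The plan is to regard $P^{n}$ as an integral operator whose weighted kernel is a translation-invariant Gaussian majorant, prove $\mathcal{L}_p$-boundedness by Young's inequality, and then deduce the reproducing identity by density from the known $L^2$ statement. The two assertions (boundedness onto $\mathcal{F}_p^{n+1}$ and the fixed-point property) are handled separately.

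First I would put the kernel into a convenient normal form directly from its definition. Writing $e^{-\pi|w|^2}=e^{-\pi w\bar w}$ and carrying out the $n$ holomorphic derivatives by the Leibniz rule (treating $\bar w$ as constant), the prefactor $e^{\pi|w|^2}$ recombines with the differentiated exponential to leave
\begin{equation*}
K^{n}(w,z)=e^{\pi\bar z w}\,Q\bigl(w-z,\overline{w-z}\bigr),
\end{equation*}
where $Q$ is a polynomial of degree at most $2n$. Consequently $|K^{n}(w,z)|\le P(|w-z|)\,e^{\pi\,\mathrm{Re}(\bar z w)}$ with $P$ a polynomial, and the weight factors from the $\mathcal{L}_p$-norm collapse into a single Gaussian:
\begin{equation*}
|K^{n}(w,z)|\,e^{-\pi|z|^2/2}e^{-\pi|w|^2/2}\le P(|w-z|)\,e^{-\pi|w-z|^2/2}=:\Phi(w-z),
\end{equation*}
and $\Phi\in L^1(\mathbb{C})$ because the Gaussian dominates the polynomial. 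Putting $g(z)=|F(z)|e^{-\pi|z|^2/2}$ gives $|P^{n}F(w)|\,e^{-\pi|w|^2/2}\le(\Phi\ast g)(w)$, so Young's inequality yields $\|P^{n}F\|_{\mathcal{L}_p}\le\|\Phi\|_1\,\|F\|_{\mathcal{L}_p}$ for every $p\in[1,\infty]$ simultaneously.

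It remains to see that $P^{n}F$ lands in the \emph{true} space $\mathcal{F}_p^{n+1}$ and that $P^n$ fixes it. Differentiating under the integral sign — legitimate thanks to the Gaussian decay of $\Phi$ — and using $(d/d\bar w)^{n+1}K^{n}(w,z)=0$ (since $K^{n}(\cdot,z)\in\mathcal{F}_2^{n+1}$) shows that $P^{n}F$ is polyanalytic of order $n+1$. To place it in the \emph{true} space I would approximate an arbitrary $F\in\mathcal{L}_p$ ($1\le p<\infty$) by $F_j\in\mathcal{L}_2\cap\mathcal{L}_p$; then $P^{n}F_j\in\mathcal{F}_2^{n+1}\subset\mathcal{F}_p^{n+1}$ because $P^{n}$ is the orthogonal projection of $\mathcal{L}_2$ onto $\mathcal{F}_2^{n+1}$, and $P^{n}F_j\to P^{n}F$ in $\mathcal{L}_p$ by the bound just proved; closedness of $\mathcal{F}_p^{n+1}$ in $\mathcal{L}_p$ — which rests on the point-evaluation estimate $|F(w)|e^{-\pi|w|^2/2}\le C\|F\|_{\mathcal{L}_p}$, forcing local uniform convergence that preserves the defining structural form — then places $P^{n}F$ in $\mathcal{F}_p^{n+1}$. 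For the identity I argue by density: the functions $e_{n,m}$ lie in $\mathcal{F}_2^{n+1}$, so the $L^2$ reproducing property gives $P^{n}e_{n,m}=e_{n,m}$, and by Proposition~\ref{densityhm}(2) their span is dense in $\mathcal{F}_p^{n+1}$ for $1\le p<\infty$; since both $P^{n}$ and the identity are continuous there and agree on this dense set, they coincide.

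The only genuinely delicate point is the endpoint $p=\infty$, where the span of $\{e_{n,m}\}$ is merely weak-$^*$ dense, so norm density is unavailable. Here I would exploit that the reproducing kernel is Hermitian, $K^{n}(w,z)=\overline{K^{n}(z,w)}$, so that under the pairing $\langle F,G\rangle=\int_{\mathbb{C}}F\,\overline{G}\,e^{-\pi|z|^2}\,dz$ realizing the duality $(\mathcal{L}_1)^{*}=\mathcal{L}_\infty$ the operator $P^{n}$ on $\mathcal{L}_\infty$ is the Banach-space adjoint of $P^{n}$ on $\mathcal{L}_1$; hence $P^{n}$ is weak-$^*$ continuous, and the identity extends from the weak-$^*$ dense span to all of $\mathcal{F}_\infty^{n+1}$. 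The main effort is concentrated in the kernel normal form of the first step; once the translation-invariant majorant $\Phi$ is in hand, boundedness, closedness of the range, and the reproducing identity all follow from routine density and duality bookkeeping.
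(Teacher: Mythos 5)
Your first step and your last step coincide with the paper's proof: the kernel normal form $|K^{n}(w,z)|\,e^{-\pi|z|^2/2}e^{-\pi|w|^2/2}\le\Phi(w-z)$ with $\Phi\in L^1$ followed by Young's inequality is exactly how the paper proves $\mathcal{L}_p$-boundedness, and the paper proves $P^nF=F$ by the same density-plus-$L^2$ argument you give (it uses the dense set $\mathcal{F}_2^{n+1}\cap\mathcal{F}_p^{n+1}$ rather than $\mathrm{span}\{e_{n,m}\}$, which is immaterial). The genuine gap is in the middle step, where you must show the image lies in the \emph{true} space $\mathcal{F}_p^{n+1}$ rather than merely in $\mathbf{F}_p^{n+1}$: differentiating under the integral only gives polyanalyticity of order $n+1$, and your repair — approximate $F\in\mathcal{L}_p$ by $F_j\in\mathcal{L}_2\cap\mathcal{L}_p$ and invoke ``closedness of $\mathcal{F}_p^{n+1}$ in $\mathcal{L}_p$'' — rests entirely on the unproven assertion that local uniform convergence preserves the structural form $F=c\,e^{\pi|z|^2}\bigl(\tfrac{d}{dz}\bigr)^n\bigl[e^{-\pi|z|^2}H(z)\bigr]$ with $H$ entire. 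That assertion is the actual crux, not bookkeeping: to pass it to the limit you must recover the entire function $H$ from $F$ (e.g.\ as a constant multiple of $(\partial/\partial\bar z)^nF$) and prove Cauchy-type estimates for polyanalytic functions so that $\bar z$-derivatives converge along with $F$; none of this is supplied. Two further defects: the inclusion $\mathcal{F}_2^{n+1}\subset\mathcal{F}_p^{n+1}$ you invoke is false for $p<2$ (what is true, and what you actually need, is that $P^nF_j$ has the structural form \emph{and} finite $\mathcal{L}_p$-norm); and the approximation route cannot reach $p=\infty$ at all, since $\mathcal{L}_2\cap\mathcal{L}_\infty$ is not norm dense in $\mathcal{L}_\infty$ — your weak-$^*$ duality remark handles only the identity $P^nF=F$ there, not the claim $P^n(\mathcal{L}_\infty)\subseteq\mathcal{F}_\infty^{n+1}$, so that half of the statement remains unproven at the endpoint.

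The missing idea is a direct one which your own normal-form computation has already prepared: instead of estimating the differentiated kernel, pull the prefactor $e^{\pi|w|^2}$ and the operator $(d/dw)^n$ \emph{out} of the integral. Since
\begin{equation*}
P^{n}F(w)=\frac{1}{n!}\,e^{\pi|w|^{2}}\left(\frac{d}{dw}\right)^{n}\left(e^{-\pi|w|^{2}}\int_{\mathbb{C}}F(z)(w-z)^{n}e^{\pi\overline{z}w}e^{-\pi|z|^{2}}\,dz\right),
\end{equation*}
and $H(w)=\int_{\mathbb{C}}F(z)(w-z)^{n}e^{\pi\overline{z}w}e^{-\pi|z|^{2}}\,dz$ is manifestly entire for any $F\in\mathcal{L}_p$ (the Gaussian decay that gave you $\Phi\in L^1$ also justifies convergence and the interchange), the image $P^nF$ has the defining structural form explicitly, for all $1\le p\le\infty$ simultaneously — no approximation, no closedness lemma, and no separate endpoint case. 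This is the paper's argument; substituting it for your second step makes the proof complete, and your observation that $P^n$ on $\mathcal{L}_\infty$ is the weak-$^*$ continuous adjoint of $P^n$ on $\mathcal{L}_1$ then becomes a worthwhile complement, since the paper itself is terse about extending $P^nF=F$ to $p=\infty$.
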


\begin{proof}
The kernel $K^{n}$ is 
\begin{equation*}
K^{n}(w,z)=\frac{1}{n!}e^{\pi \left\vert w\right\vert ^{2}}\left( \frac{d}{dw%
}\right) ^{n}\left[ e^{\pi \overline{z}w-\pi \left\vert w\right\vert
^{2}}(w-z)^{n}\right] =\sum_{k=0}^{n}\binom{n}{k}\frac{1}{k!}(-\pi
|w-z|^{2})^{k}e^{\pi \bar{z}w}\,,
\end{equation*}%
so 
\begin{equation*}
P^{n}F(w)e^{-\pi |w|^{2}/2}=\int_{%
\mathbb{C}
}F(z)e^{-\pi |z|^{2}/2}\left( \sum_{k=0}^{n}\binom{n}{k}\frac{1}{k!}(-\pi
|w-z|^{2})^{k}e^{\pi \bar{z}w}\right) e^{-\pi |w|^{2}/2}\,e^{-\pi
|z|^{2}/2}\,dz\,.
\end{equation*}%
We take absolute values and observe that $e^{-\pi |w-z|^{2}/2}=|e^{\pi \bar{z%
}w}|e^{-\pi |w|^{2}/2}e^{-\pi |z|^{2}/2}$, in this way we obtain that 
\begin{equation*}
|P^{n}F(w)e^{-\pi |w|^{2}/2}|\leq \int_{%
\mathbb{C}
}|F(z)|e^{-\pi |z|^{2}/2}\left( \sum_{k=0}^{n}\binom{n}{k}\frac{1}{k!}(-\pi
|w-z|^{2})^{k}\right) e^{-\pi |w-z|^{2}/2}\,dz\, .
\end{equation*}%
Now set $\Phi (z)=|F(z)|e^{-\pi |z|^{2}/2}\in L^{p}(%
\mathbb{R}
^{2})$ and $\Psi (z)=\left( \sum_{k=0}^{n}\binom{n}{k}\frac{1}{k!}(-\pi
|w-z|^{2})^{k}\right) e^{-\pi |w-z|^{2}/2}\in L^{1}(%
\mathbb{R}
^{2})$. Then 
\begin{align}
\left( \int_{%
\mathbb{C}
}|P^{n}F(w)|^{p}e^{-\pi p|w|^{2}/2}\,dw\right) ^{1/p}& \leq \Vert \Phi \ast
\Psi \Vert _{p}  \notag \\
& \leq \Vert \Phi \Vert _{p}\,\Vert \Psi \Vert _{1}  \notag \\
& =C\left( \int_{%
\mathbb{C}
}|F(z)|^{p}e^{-\pi p|z|^{2}/2}\,dz\right) ^{1/p}=\Vert F\Vert _{\mathcal{L}%
_{p}}\,,  \label{boundt}
\end{align}%
and thus $P^{n}$ is bounded on $\mathcal{L}_{p}$.

Next set $H(w) =\int_{%
\mathbb{C}
}F(z)(w-z)^{n}e^{\pi \overline{z}w}\,e^{-\pi |z|^{2}}\,dz$. Since $F\in \cL %
_p$, the integral is well-defined and $H$ is an entire function. Since 
\begin{equation*}
P^{n}F(w)=\frac{1}{n!}e^{\pi \left\vert w\right\vert ^{2}}\left( \frac{d}{dw}%
\right) ^{n}\left( e^{-\pi |w|^{2}}\,\int_{%
\mathbb{C}
}F(z)(w-z)^{n}e^{\pi \overline{z}w}\,e^{-\pi |z|^{2}}\,dz\,\right) \,,
\end{equation*}%
it follows that $P^{n}F$ is a true polyanalytic function. By the boundedness
of $P^n$, we also have $P^{n}F\in \mathcal{F}_{p}^{n+1}$. Finally, if $F \in 
\mathcal{F}_{2}^{n+1}\cap \mathcal{F}_{p}^{n+1}$, then $P^nF = F$ by the
reproducing kernel property in $\mathcal{F}_{2}^{n+1}$. Since $\mathcal{F}%
_{2}^{n+1}\cap \mathcal{F}_{p}^{n+1}$ is dense in $\mathcal{F}_{p}^{n+1}$ by
Lemma~\ref{densityhm}, the identity $P^nF = F$ extends to all $F\in \cF %
^{n+1}_p$.
\end{proof}

\section{Gabor frames in $L^{2}$}

Stable Gabor expansions of the form (\ref{HermiteExpansions}) can be
obtained from frame theory. Given a point $\lambda =(\lambda _1,\lambda _2 )$
in phase-space $%
\mathbb{R}
^{2}$, the corresponding time-frequency shift is%
\begin{equation*}
\pi _{\lambda }f(t)=e^{2\pi i\lambda _2 t}f(t-\lambda _1)\text{, \ \ \ }t\in 
\mathbb{R}
\text{.}
\end{equation*}%
Using this notation, the \stft\ of a function $f$ with respect to the window 
$g$ can be written as%
\begin{equation*}
V_{g}f(\lambda )=\left\langle f,\pi _{\lambda }g\right\rangle _{L^{2}(%
\mathbb{R}
)}\text{.}
\end{equation*}

In analogy to the \tfs s $\pi _{\lambda }$, we use the Bargmann-Fock shifts $%
\beta _{\lambda }$ defined for functions on $\bC$ by 
\begin{equation*}
\beta _{\lambda }F(z)=e^{\pi i\lambda _{1}\lambda _{2}}e^{\pi \bar{\lambda}%
z}F(z-\lambda )\,e^{-\pi |\lambda |^{2}/2}\,.
\end{equation*}%
We observe that the true polyanalytic Bargmann transform intertwines the \tfs%
\ $\pi _{\lambda }$ and the Bargmann-Fock representation $\beta _{\lambda }$
on $\cF^{n}$ by a calculation similar to \cite[p.~185]{Charly}:%
\begin{equation}
\mathcal{B}^{n}\left( \pi _{\lambda }\gamma \right) (z)=\beta _{\lambda }%
\mathcal{B}^{n}\gamma (z)\,,  \label{intertwining}
\end{equation}%
for $\gamma \in L^{2}(\bR)$. For a countable subset $\Lambda \in 
\mathbb{R}
^{2}$, one says that the Gabor system $\mathcal{G}\left( h_{n},\Lambda
\right) =\{\pi _{\lambda }h_{n}:\lambda \in \Lambda \}$ is a \emph{Gabor
frame} or \emph{Weyl-Heisenberg frame }in $L^{2}(%
\mathbb{R}
)$,\emph{\ }whenever there exist constants $A,B>0$ such that, for all $f\in
L^{2}(%
\mathbb{R}
)$,%
\begin{equation}
A\left\Vert f\right\Vert _{L^{2}(%
\mathbb{R}
)}^{2}\leq \sum_{\lambda \in \Lambda }\left\vert \left\langle f,\pi
_{\lambda }h_{n}\right\rangle _{L^{2}(%
\mathbb{R}
)}\right\vert ^{2}\leq B\left\Vert f\right\Vert _{L^{2}(%
\mathbb{R}
)}^{2}.  \label{frame}
\end{equation}

\subsection{A polyanalytic interpolation formula for $\mathcal{F}^{n}(%
\mathbb{C}
)$.}

Consider the lattice $\Lambda =\{m_{1}\lambda _{1}+m_{2}\lambda
_{2};m_{1},m_{2}\in 
\mathbb{Z}
\}\subset 
\mathbb{C}
$ spanned by the periods $\lambda _{1},\lambda _{2}\in 
\mathbb{C}
$, where $\func{Im}(\lambda _{1}/\lambda _{2})>0$. The size of $\Lambda $ is
the area of the parallelogram spanned by $\lambda _1$ and $\lambda _2$ in $%
\bC $. If we identify $\bR ^2$ and $\bC$, then we can write $\Lambda $ as $%
\Lambda = A\bZ ^2$ where $A=[\lambda _{1},\lambda _{2}]$ is an invertible
real $2\times 2$ matrix. Then the size of the lattice is $s(\Lambda )=| \det
A |$.

Let $\sigma $ be the Weierstrass sigma function corresponding to $\Lambda $
defined by 
\begin{equation*}
\sigma (z) = z\prod_{\lambda \in \Lambda \backslash \{0\}}\left( 1-\frac{z}{
\lambda }\right) e^{\frac{z}{\lambda }+\frac{z^{2}}{2\lambda ^{2}}} \, . 
\end{equation*}



It is then possible to choose an exponent $a = a(\Lambda ) \in \bC $, such
that the modified sigma function $\sigma  _{\Lambda }(z)$ associated to the
lattice $\Lambda $ 
\begin{equation*}
\sigma _{\Lambda }(z)=\sigma (z)e^{a\left( \Lambda \right) z^{2}}\text{}
\end{equation*}
satisfies the growth estimate 
\begin{equation}  \label{eq:c3}
\left\vert \sigma _{\Lambda }(z)\right\vert \lesssim e^{\frac{\pi }{%
2s(\Lambda )}\left\vert z\right\vert ^{2}}\text{.}
\end{equation}
See for instance Proposition 3.5 in \cite{CharlyYurasuper}. We will only
work with the modified sigma function $\sigma _\Lambda $.

Our discussion of sampling theorems for polyanalytic functions will be based
on the following function related to the Weierstrass sigma function: 
\begin{equation}  \label{intexp}
S_{\Lambda }^n(z)=e^{\pi \left\vert z\right\vert ^{2}}\left( \frac{d}{dz}%
\right) ^{n}\left[ e^{-\pi \left\vert z\right\vert ^{2}}\frac{ \sigma
_{\Lambda }(z) ^{n+1}}{n! \, z}\right]
\end{equation}%
Then by definition $S^n_\Lambda $ is polyanalytic of order $n+1$.

Before stating our result, recall that the set $\Lambda $ is \emph{an
interpolating sequence} for $\mathcal{F}^{n}(%
\mathbb{C}
)$ if, for every sequence $\{a_{\lambda }\}_{\lambda \in \Lambda }\in \ell
^{2}(\Lambda )$, there exists $F\in \mathcal{F}^{n}(%
\mathbb{C}
)$ such that 
\begin{equation*}
F(\lambda )\, e^{-\frac{\pi }{2}\left\vert \lambda \right\vert ^{2}} =a
_{\lambda },
\end{equation*}%
for every $\lambda \in \Lambda .$\ 

By means of $S^n_\Lambda $ we can now formulate an explicit solution to the
interpolation problem on $\Lambda $ for $\cF ^n$.

\begin{theorem}
\label{th1} If $s(\Lambda )>n+1$, then $\Lambda $ is an interpolating
sequence for $\mathcal{F}^{n+1}(%
\mathbb{C}
)$. Moreover, the interpolation problem is solved by 
\begin{equation}
F(z)=\sum_{\lambda \in \Lambda }a_{_{\lambda }}e^{\pi \overline{\lambda }%
z-\pi \left\vert \lambda \right\vert ^{2}/2} \, S^n_{\Lambda }(z-\lambda ) ,
\label{series}
\end{equation}
\end{theorem}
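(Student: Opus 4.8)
The plan is to recognize the summand $S^n_\Lambda$ as a true polyanalytic Bargmann image of an explicit entire function, and the whole series as a superposition of Bargmann--Fock shifts $\beta_\lambda$, and then to separate the argument into three independent issues: (i) membership $S^n_\Lambda\in\cF^{n+1}(\bC)$; (ii) the values of $S^n_\Lambda$ on the lattice; and (iii) convergence of the series in $\cF^{n+1}(\bC)$. Throughout I write $H(z)=\sigma_\Lambda(z)^{n+1}/(n!\,z)$; since $\sigma_\Lambda$ has a simple zero at $0$, the factor $\sigma_\Lambda^{n+1}$ has a zero of order $n+1$ there which cancels the pole of $1/z$, so $H$ is entire. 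Applying the Leibniz rule for the Wirtinger derivative $d/dz$ to $e^{-\pi|z|^2}H$ gives the pointwise identity
\begin{equation*}
S^n_\Lambda(z)=\sum_{k=0}^{n}\binom{n}{k}(-\pi\overline{z})^{k}H^{(n-k)}(z),
\end{equation*}
which I will use for every computation below.

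First I would settle membership and decay. From the growth estimate \eqref{eq:c3} one gets $|H(z)|\lesssim e^{\pi(n+1)|z|^2/(2s(\Lambda))}$, and by Cauchy's estimates the holomorphic derivatives $H^{(j)}$ obey the same bound up to a polynomial factor. Feeding this into the displayed expansion yields
\begin{equation*}
|S^n_\Lambda(z)|\,e^{-\pi|z|^2/2}\lesssim (1+|z|)^{n}\,e^{\frac{\pi}{2}\left(\frac{n+1}{s(\Lambda)}-1\right)|z|^2}.
\end{equation*}
Exactly here the hypothesis $s(\Lambda)>n+1$ enters: it makes the exponent strictly negative, so $S^n_\Lambda$, after the Gaussian weight, decays like $e^{-\delta|z|^2}$ for some $\delta>0$. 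In particular $H\in\cF(\bC)=\mathcal{B}(L^2(\bR))$, whence $S^n_\Lambda=(\pi^{n}/n!)^{-1/2}\mathcal{B}^{n+1}(\mathcal{B}^{-1}H)$ lies in $\cF^{n+1}(\bC)$ by \eqref{eq:hmhm}.

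Next I would compute the values of $S^n_\Lambda$ on $\Lambda$. At a nonzero lattice point $\nu$ the function $H$ vanishes to order exactly $n+1$, so $H^{(n-k)}(\nu)=0$ for every $0\le k\le n$, and the expansion gives $S^n_\Lambda(\nu)=0$. At $\nu=0$ the Taylor expansion $H(z)=z^{n}/n!+O(z^{n+2})$ yields $H^{(n)}(0)=1$, while the factors $(-\pi\overline{z})^{k}$ annihilate all terms with $k\ge1$, so $S^n_\Lambda(0)=1$. Thus $S^n_\Lambda$ restricted to $\Lambda$ is the Kronecker delta at the origin, which is precisely the reproduction property the interpolation formula requires.

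Finally I would prove convergence and conclude. Writing $\gamma=(\mathcal{B}^{n+1})^{-1}S^n_\Lambda\in L^2(\bR)$, the identity $|S^n_\Lambda(z)|e^{-\pi|z|^2/2}=|V_{h_n}\gamma(x,-\xi)|$ together with the Gaussian decay above and the window-independence of the modulation-space norm shows $\gamma\in M^1(\bR)$; consequently $\{\pi_\lambda\gamma\}_{\lambda\in\Lambda}$ is a Bessel sequence over the lattice and the synthesis map $\{c_\lambda\}\mapsto\sum_\lambda c_\lambda\pi_\lambda\gamma$ is bounded $\ell^2(\Lambda)\to L^2(\bR)$. Applying the unitary $\mathcal{B}^{n+1}$ and the intertwining relation \eqref{intertwining} turns $\sum_\lambda a_\lambda e^{\pi\overline{\lambda}z-\pi|\lambda|^2/2}S^n_\Lambda(z-\lambda)=\sum_\lambda a_\lambda e^{-\pi i\lambda_1\lambda_2}\beta_\lambda S^n_\Lambda$ into a series converging in $\cF^{n+1}(\bC)$ with norm $\lesssim\|\{a_\lambda\}\|_{\ell^2}$. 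Since point evaluation is continuous on $\cF^{n+1}(\bC)$ (a reproducing kernel space), I may evaluate termwise at $\mu\in\Lambda$; by step (ii) only the term $\lambda=\mu$ survives and equals $a_\mu e^{\pi|\mu|^2/2}$, so $F(\mu)e^{-\pi|\mu|^2/2}=a_\mu$. I expect the main obstacle to be steps (i) and (iii), namely converting the sigma-function bound \eqref{eq:c3} into the quantitative Gaussian decay that simultaneously places $S^n_\Lambda$ in the Fock space and secures the Bessel bound; once that is in hand, the value computation in step (ii) is a short formal consequence of the Leibniz identity.
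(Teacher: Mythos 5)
Your proposal is correct and is essentially the paper's own argument: you realize $S^n_\Lambda$ as a constant multiple of $\mathcal{B}^{n+1}\gamma$ for a suitable $\gamma$, read off $S^n_\Lambda(\lambda)=\delta_{\lambda,0}$ from the zero structure of $\sigma_\Lambda^{n+1}/z$ via the Leibniz expansion, and obtain convergence of \eqref{series} from $\gamma\in M^1(\mathbb{R})$ (boundedness of the synthesis map on the lattice) combined with the intertwining property \eqref{intertwining}. The only cosmetic difference is your route to $M^1$: you estimate $S^n_\Lambda$ itself, using Cauchy estimates on the derivatives $H^{(j)}$ and the window $h_n$, whereas the paper concludes $\gamma_\Lambda\in M^1(\mathbb{R})$ directly from $|V_{h_0}\gamma_\Lambda(z)|=|\mathcal{B}\gamma_\Lambda(z)|\,e^{-\pi|z|^2/2}$ and the growth bound \eqref{eq:c3}, which needs no derivative estimates (and your "polynomial factor" from the Cauchy estimates is really a factor $e^{c|z|}$, harmless here since the quadratic exponent is strictly negative).
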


\begin{proof}
The growth estimate~\eqref{eq:c3} implies that 
\begin{equation*}
\Big|\frac{\sigma _\Lambda ^{n+1}(z)}{z} \Big| \lesssim e^{\frac{\pi (n+1)}{%
2 s(\Lambda )} \left\vert z\right\vert ^{2}}\text{.}
\end{equation*}
Since $s(\Lambda )>n+1$, we have $\sigma _{\Lambda }(z) ^{n+1}/z \in 
\mathcal{F}_{2}(%
\mathbb{C}
)$. By unitarity of the Bargmann transform, there exists a $\gamma = \gamma
_{n, \Lambda } = \gamma _\Lambda \in L^{2}\left( 
\mathbb{R}
\right) $ such that $\mathcal{B} \gamma _{\Lambda }(z) = \sigma _{\Lambda
}(z) ^{n+1}/z$. Furthermore, since $|V_{h_0} \gamma _\Lambda (z)| = |\cB %
\gamma _\Lambda (z)| \, e^{-\pi |z|^2/2}$, it follows that $\gamma _\Lambda
\in M^1(\bR )$ (or even in the Schwartz class).

Comparing~\eqref{eq:hmhm} and \eqref{intexp} we find that 
\begin{equation*}
S_{\Lambda }^{n}(z)=\left( \frac{\pi ^{n}}{n!}\right) ^{\frac{1}{2}}(%
\mathcal{B}^{n+1}\gamma _{\Lambda })(z)\text{,}
\end{equation*}%
and $S_{\Lambda }^{n}\in \mathcal{F}_{2}^{n+1}(%
\mathbb{C}
)$. As in the proof of \cite[Thm.~1.1]{CharlyYurasuper} we show that $%
S_{\Lambda }^{n}$ is interpolating on $\Lambda $. Using the Leibniz formula,
we expand $S_{\Lambda }^{n}$ as 
\begin{equation*}
S_{\Lambda }^{n}(z)=\sum_{k=0}^{n}\binom{n}{k}(-\pi \bar{z})^{k}\Big(\frac{d%
}{dz}\Big)^{n-k}\left( \frac{\sigma _{\Lambda }^{n+1}(z)}{n!\,z}\right) 
\end{equation*}%
Since $\sigma _{\Lambda }^{n+1}(z)/z$ has zeros of order $n+1$ at $\lambda
\in \Lambda \setminus \{0\}$ and a zero of order $n$ at $\lambda =0$, it
follows that $S_{\Lambda }^{n}(\lambda )=\delta _{\lambda ,0}$ for $\lambda
\in \Lambda $. Consequently, if $F$ is defined by~\eqref{series}, then $%
F(\lambda )e^{-\pi |\lambda |^{2}/2}=a_{\lambda }$, and $F$ is indeed an
interpolation of the sequence $(a_{\lambda })$.

It remains to show that the interpolation series \eqref{series} converges in 
$\cF ^{n+1}$. For this we need the additional information that $\gamma
_\Lambda $ is in $M^1(\bR )$ and the intertwining property~%
\eqref{intertwining}. Since $\gamma _\Lambda \in M^1(\bR )$, the series $%
\sum _{\lambda \in \Lambda } a_\lambda \pi _\lambda \gamma _\Lambda $
converges unconditionally in $L^2(\bR)$ (e.g., by \cite[Thm.~12.2.4]{Charly}%
). Therefore the series 
\begin{eqnarray*}
F(z)&=&\sum_{\lambda \in \Lambda }a_{_{\lambda }}e^{\pi \overline{\lambda }%
z-\pi \left\vert \lambda \right\vert ^{2}/2} \, S^n_{\Lambda }(z-\lambda ) \\
&=& \sum_{\lambda \in \Lambda }a_{_{\lambda }} \beta _\lambda S_\Lambda ^n
(z) \\
&=& \sum_{\lambda \in \Lambda }a_{_{\lambda }} \beta _\lambda \cB ^{n+1}
\gamma _\Lambda (z) \\
&=& \sum_{\lambda \in \Lambda }a_{_{\lambda }} \cB ^{n+1} (\pi _\lambda
\gamma _\Lambda ) (z) \\
&=& \cB ^{n+1} \Big(\sum_{\lambda \in \Lambda }a_{\lambda } \pi _\lambda
\gamma _\Lambda \Big) (z) \,
\end{eqnarray*}
converges in $\cF ^{n+1}$, and the proof is completed.
\end{proof}

\subsection{Gabor frames with Hermite functions on $L^{2}(%
\mathbb{R}
)$}

Following Feichtinger and Kozek \cite{FK}, the adjoint lattice $\Lambda ^{0}$
is defined by the commuting property as%
\begin{equation*}
\Lambda ^{0}=\{\mu \in 
\mathbb{R}
^{2}:\pi _{\lambda }\pi _{\mu }=\pi _{\mu }\pi _{\lambda }\text{, for all }%
\lambda \in \Lambda \}\text{.}
\end{equation*}%
If $\Lambda =\alpha 
\mathbb{Z}
\times \beta 
\mathbb{Z}
$, then $\Lambda ^{0}=\beta ^{-1}%
\mathbb{Z}
\times \alpha ^{-1}%
\mathbb{Z}
$.

There exists a remarkable duality between the Gabor systems with respect to $%
\Lambda ^{0}$ and those with respect to $\Lambda $. This is often referred
to as the \emph{Janssen-Ron-Shen duality principle }~\cite%
{JanssenDuality,RonShen}.

\textbf{Theorem A (}\emph{Duality principle}\textbf{). } \emph{The Gabor
system $\mathcal{G}(g,\Lambda )$ is a \emph{frame} for $L^{2}(%
\mathbb{R}
)$ if and only if the Gabor system $\mathcal{G}(g,\Lambda ^{0})$ is a Riesz
basis for its closed linear span in $L^{2}(%
\mathbb{R}
)$.}

\vspace{ 3mm}

Combining the duality principle with Theorem 1, one recovers the result of~%
\cite{CharlyYura}:

\begin{theorem}
If $s(\Lambda )<\frac{1}{n+1}$, then the Gabor system $\mathcal{G}%
(h_{n},\Lambda )$ is a frame for $L^{2}(%
\mathbb{R}
)$.
\end{theorem}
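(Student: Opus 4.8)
The plan is to combine Theorem~\ref{th1} (the interpolation result for $\cF^{n+1}(\bC)$) with the duality principle (Theorem~A), exploiting the equivalence between Gabor frames with Hermite windows and sampling/interpolation in the true polyanalytic Fock space. The strategy rests on the following dictionary: by the intertwining property~\eqref{intertwining}, the true polyanalytic Bargmann transform $\cB^{n+1}$ carries the time-frequency shifts $\pi_\lambda h_n$ to the Bargmann-Fock shifts $\beta_\lambda$ acting on the reproducing kernel of $\cF^{n+1}(\bC)$, so frame-theoretic statements about $\cG(h_n,\Lambda)$ translate into statements about sampling or interpolation in $\cF^{n+1}(\bC)$.

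\textbf{The main steps.} First I would invoke the duality principle: the Gabor system $\cG(h_n,\Lambda)$ is a frame for $L^2(\bR)$ if and only if $\cG(h_n,\Lambda^0)$ is a Riesz basis for its closed linear span, where $\Lambda^0$ is the adjoint lattice. Since the area satisfies $s(\Lambda^0)=1/s(\Lambda)$, the hypothesis $s(\Lambda)<\frac{1}{n+1}$ is exactly equivalent to $s(\Lambda^0)>n+1$, which is the hypothesis of Theorem~\ref{th1}. Second, I would reformulate the Riesz-basis (for its span) property in the Fock-space language. Applying $\cB^{n+1}$ and using the intertwining~\eqref{intertwining}, the system $\{\pi_\mu h_n : \mu\in\Lambda^0\}$ being a Riesz sequence in $L^2(\bR)$ is equivalent to $\{\beta_\mu \cB^{n+1}h_n : \mu\in\Lambda^0\}$ being a Riesz sequence in $\cF^{n+1}(\bC)$; by general frame duality, a Riesz sequence over $\Lambda^0$ is precisely dual to an interpolating property, and the reproducing kernel structure turns this into the statement that $\Lambda^0$ is an interpolating sequence for $\cF^{n+1}(\bC)$. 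Theorem~\ref{th1} supplies exactly this, since $s(\Lambda^0)>n+1$, with the explicit interpolant built from $S^n_{\Lambda^0}$. Running the chain of equivalences backward then yields that $\cG(h_n,\Lambda)$ is a frame.

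The cleanest way to organize this is to state the precise equivalence ``$\cG(h_n,\Lambda^0)$ is a Riesz sequence $\iff$ $\Lambda^0$ is interpolating for $\cF^{n+1}(\bC)$'' as the bridge, establish it via the unitary transform $\cB^{n+1}$ and the intertwining relation, and then feed in Theorem~\ref{th1} followed by the duality principle. One should note that Riesz-basis-for-its-span requires both lower and upper Riesz bounds; the upper bound (Bessel property) is automatic here because $h_n\in M^1(\bR)$ (indeed Schwartz), so only the interpolation side needs the area condition, which matches the strict inequality $s(\Lambda)<\frac{1}{n+1}$.

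\textbf{The main obstacle.} The delicate point is making the equivalence between ``Riesz basis for its span'' and ``interpolating sequence'' fully rigorous through $\cB^{n+1}$: one must verify that the interpolation property of $\Lambda^0$ for the whole space $\cF^{n+1}(\bC)$ (as furnished by Theorem~\ref{th1}) corresponds exactly to the Riesz-sequence property of the translated reproducing kernels, and in particular that the sampling functional $F\mapsto F(\mu)e^{-\pi|\mu|^2/2}$ is realized as an inner product against $\beta_\mu\cB^{n+1}h_n$ up to a constant. This requires identifying $\cB^{n+1}h_n$ (or the relevant generator) with the reproducing kernel at the origin and checking the normalizations, after which the abstract duality between interpolation and Riesz sequences in a reproducing kernel Hilbert space closes the argument. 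Since the paper already develops all the needed machinery---unitarity of $\cB^{n+1}$, the intertwining~\eqref{intertwining}, and Theorem~\ref{th1}---the remaining work is bookkeeping rather than a new idea, and the proof should be short.
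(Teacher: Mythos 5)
Your proposal is correct and follows essentially the same route as the paper's own proof: observe $s(\Lambda^0)=1/s(\Lambda)>n+1$, apply Theorem~\ref{th1} to get that $\Lambda^0$ is interpolating for $\mathcal{F}^{n+1}(\mathbb{C})$, translate this via $\mathcal{B}^{n+1}$ (using $\langle f,\pi_\mu h_n\rangle = e^{i\pi x\xi-\frac{\pi}{2}|z|^2}\mathcal{B}^{n+1}f(z)$) into the Riesz-sequence property of $\mathcal{G}(h_n,\Lambda^0)$, and conclude by the duality principle. In fact, your explicit treatment of the bridge ``interpolating sequence plus Bessel bound $\Rightarrow$ Riesz basis for its span'' spells out precisely the step the paper compresses into ``it is clear that,'' so no gap remains.
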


\begin{proof}
First observe that $s\left( \Lambda ^{0}\right) =\frac{1}{s(\Lambda )}$. If $%
s(\Lambda )<\frac{1}{n+1}$, then $s\left( \Lambda ^{0}\right) >n+1$. It
follows from Theorem 1 that the lattice $\Lambda ^{0}$ is an interpolating
sequence for $\mathcal{F}^{n+1}_2(%
\mathbb{C}
)$. Since%
\begin{equation*}
\left\langle f,\pi _{\lambda }h_{n}\right\rangle _{L^{2}(%
\mathbb{R}
)}=V_{h_{n}}f(x,\xi )=e^{i\pi x\xi -\frac{\pi }{2}\left\vert z\right\vert
^{2}}\mathcal{B}^{n+1}f(z)\text{,}
\end{equation*}%
then it is clear that $\mathcal{G}(h_{n},\Lambda ^{0})$ is a Riesz basis for
its linear span in $L^{2}(%
\mathbb{R}
)$. By the duality principle, the Gabor system $\mathcal{G}(h_{n},\Lambda )$
is a frame for $L^{2}(%
\mathbb{R}
)$.
\end{proof}

\section{Banach frames}

In this section we extend the results about Gabor frame expansions in $L^2(%
\bR )$ and the sampling theorem in $\cF ^n_2(\bC )$ to a class of associated
Banach spaces. This extension can be formulated in terms of Banach frames 
\cite{atomGroch} and can be done conveniently with the theory of localized
frames \cite{Localization_one,FoG05}.

\subsection{Banach frames}

The theory of localized frames asserts that every ``nice'' frame is
automatically a Banach frame for an associated class of Banach spaces.

For the description of \modsp s with Gabor frames we recall a precise
statement from \cite[Thm.\ 9]{Localization_one}.

\begin{theorem}
\label{loc} Assume that $\Lambda \subseteq \bR ^2$ is a lattice, that $g\in
M^1(\bR )$, and that $\{\pi _\lambda g : \lambda \in \Lambda \}$ is a frame
for $L^2(\bR ) $.

Then there exists a dual window $\gamma \in M^1(\bR)$, such that the
corresponding frame expansion 
\begin{equation}  \label{eq:1}
f = \sum _{\lambda \in \Lambda } \langle f, \pi _\lambda g \rangle \pi
_\lambda \gamma
\end{equation}
converges unconditionally in $M^p (\bR )$ for $1 \leq p < \infty $ (and weak-%
$^*$ in $M^\infty (\bR )$ ).

A distribution $f$ belongs to the \modsp\ $M^p(\bR )$, \fif\ the frame
coefficients $\langle f, \pi _\lambda g\rangle $ belong to $\ell ^p(\Lambda
) $. Furthermore, the following norm equivalence holds on $M^p(\bR )$: 
\begin{equation}  \label{eq:2}
\|f\|_{M^p} \asymp \Big(\sum _{\lambda \in \Lambda } |\langle f, \pi
_\lambda g\rangle |^p \Big) ^{1/p}
\end{equation}
\end{theorem}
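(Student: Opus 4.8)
The statement is the Gabor-specific instance of the localization theory of \cite{Localization_one}, and the plan is to prove it in two movements: an algebraic core establishing that the canonical dual window stays in the Feichtinger algebra $M^1(\bR)$, followed by a soft functional-analytic argument transporting the $L^2$ frame identity to every \modsp\ $M^p(\bR)$. I expect the entire difficulty to be concentrated in the first movement.

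\emph{Keeping the dual window in $M^1$.} Let $S=\sum_{\lambda\in\Lambda}\langle\cdot,\pi_\lambda g\rangle\,\pi_\lambda g$ be the frame operator, which is invertible on $L^2(\bR)$ by hypothesis. Since $g\in M^1(\bR)$, its self-correlation $V_gg$ lies in the Wiener amalgam space $W(L^\infty,\ell^1)(\bR^2)$, so its lattice samples $(V_gg(\mu))_{\mu\in\Lambda^{\circ}}$ are summable, whence the Janssen coefficients $\langle g,\pi_\mu g\rangle$ lie in $\ell^1(\Lambda^{\circ})$. I would then invoke the Janssen representation of the frame operator,
\[
S=s(\Lambda)^{-1}\sum_{\mu\in\Lambda^{\circ}}\langle g,\pi_\mu g\rangle\,\pi_\mu ,
\]
which exhibits $S$ as an element of the twisted-convolution Banach $*$-algebra $\mathcal{A}=\{\sum_{\mu\in\Lambda^{\circ}}c_\mu\pi_\mu:(c_\mu)\in\ell^1(\Lambda^{\circ})\}$. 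The crux --- and the step I expect to be the main obstacle --- is to conclude that $S^{-1}$ again belongs to $\mathcal{A}$. This is precisely the noncommutative Wiener Lemma of Gr\"ochenig and Leinert \cite{GL}, by which $\mathcal{A}$ is a symmetric, hence inverse-closed (spectral), subalgebra of $B(L^2(\bR))$. Writing $S^{-1}=\sum_{\mu\in\Lambda^{\circ}}d_\mu\pi_\mu$ with $(d_\mu)\in\ell^1(\Lambda^{\circ})$, the canonical dual window is $\gamma=S^{-1}g=\sum_{\mu}d_\mu\,\pi_\mu g$, an absolutely convergent series of \tfs s of $g$; as $M^1(\bR)$ is a $\pi_\mu$-invariant Banach space on which each $\pi_\mu$ is isometric, this forces $\gamma\in M^1(\bR)$.

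\emph{Transferring the expansion to $M^p$.} Once $g,\gamma\in M^1(\bR)$, I would use the standard mapping properties of Gabor analysis and synthesis with Feichtinger-algebra windows \cite[Ch.~12]{Charly}: the analysis operator $C_g:f\mapsto(\langle f,\pi_\lambda g\rangle)_\lambda$ is bounded $M^p(\bR)\to\ell^p(\Lambda)$ and the synthesis operator $D_\gamma:(c_\lambda)\mapsto\sum_\lambda c_\lambda\pi_\lambda\gamma$ is bounded $\ell^p(\Lambda)\to M^p(\bR)$, for every $1\le p\le\infty$. The reconstruction identity $D_\gamma C_g=\mathrm{Id}$ holds on $L^2(\bR)$; since $M^1(\bR)$ is dense in $M^p(\bR)$ for $1\le p<\infty$ and both operators are bounded on the $M^p$/$\ell^p$ scale, a density argument extends \eqref{eq:1} to all of $M^p(\bR)$ with unconditional convergence. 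For $p=\infty$ the same identity holds in the weak-$^*$ sense, $M^\infty(\bR)$ being the dual $(M^1(\bR))'$ of the Feichtinger algebra.

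\emph{Norm equivalence and the coefficient characterization.} The upper bound $\|C_gf\|_{\ell^p}\lesssim\|f\|_{M^p}$ is just the boundedness of $C_g$; for the reverse I would apply $D_\gamma$ to the coefficients and use $f=D_\gamma C_gf$ to get $\|f\|_{M^p}=\|D_\gamma C_gf\|_{M^p}\lesssim\|C_gf\|_{\ell^p}$, which together give \eqref{eq:2}. Finally, the membership characterization follows from the same two operators: if the coefficients lie in $\ell^p(\Lambda)$ then $D_\gamma$ maps them to an element of $M^p(\bR)$ equal to $f$ by reconstruction, while $f\in M^p(\bR)$ forces the coefficients into $\ell^p(\Lambda)$ by boundedness of $C_g$. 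I would emphasize that none of these final steps uses anything beyond $g,\gamma\in M^1(\bR)$, which is why the inverse-closedness input of \cite{GL} is the true heart of the theorem.
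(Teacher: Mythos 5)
Your proposal is correct, but it does not follow the paper's route, because the paper gives no proof at all: Theorem~\ref{loc} is recalled verbatim as Theorem~9 of \cite{Localization_one}, and the paper leans on the general machinery of localized frames developed there. What you have written is, in effect, the original Gr\"ochenig--Leinert argument of \cite{GL} specialized to this setting: since $g\in M^1(\bR )$ one has $V_gg\in W(L^\infty ,\ell ^1)(\bR ^2)$, so the Janssen coefficients $\bigl( \langle g,\pi _\mu g\rangle \bigr) _{\mu \in \Lambda ^0}$ are summable and the Janssen representation of the frame operator exhibits $S$ in the twisted-convolution algebra $\mathcal{A}$; the noncommutative Wiener lemma of \cite{GL} then yields $S^{-1}\in \mathcal{A}$ and hence $\gamma =S^{-1}g\in M^1(\bR )$, after which the boundedness of $C_g\colon M^p\to \ell ^p$ and $D_\gamma \colon \ell ^p\to M^p$ for windows in $M^1$ \cite{Charly} gives \eqref{eq:1} and \eqref{eq:2} by density of $M^1$ in $M^p$ for $p<\infty $. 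The trade-off between the two routes is real: yours is intrinsically lattice-bound --- the Janssen representation and the algebra over the adjoint lattice $\Lambda ^0$ have no analogue for irregular sampling sets --- while the localization-theory proof cited by the paper rests on spectral invariance of matrices with off-diagonal decay and therefore covers non-lattice index sets, weighted modulation spaces, and higher dimensions, exactly the generality the paper alludes to after the theorem; in exchange, your argument is more explicit and isolates the inverse-closedness of $\mathcal{A}$ as the single analytic heart. Two points deserve a sentence in a polished write-up, though neither is a gap: to know that $\gamma =S^{-1}g$ generates the expansion \eqref{eq:1} you should note that $S$, hence $S^{-1}$, commutes with $\pi _\lambda $ for $\lambda \in \Lambda $ (immediate from the Janssen representation, or from the usual covariance of the frame operator); and for $p=\infty $ the coefficient characterization requires weak-$^*$ density and weak-$^*$ continuity of $C_g$ and $D_\gamma $ rather than the norm-density argument you use for $p<\infty $.
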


As a consequence of the duality theory (Theorem~A) the dual window $\gamma $
satisfies the biorthogonality condition $s(\Lambda )\inv \, \langle \gamma ,
\pi _\mu g \rangle = \delta _{\mu , 0}$ for $\mu \in \Lambda ^0$.

For $p=2$ the properties \eqref{eq:1} and \eqref{eq:2} are the defining
properties of a frame of a Hilbert space. By analogy for $p\neq 2$, we call
a set satisfying \eqref{eq:1} and \eqref{eq:2} Banach frame for $M^p(\bR )$.

In this paper we have restricted ourselves to dimension $d=1$ and the
unweighted case, but the theory of localized frames offers much more general
versions of Theorem~\ref{loc}.

\subsection{Explicit sampling formulas in $\mathcal{F}_{p}^{n}(%
\mathbb{C}
)$}

In this section we translate Theorem~\ref{loc} into the language of
polyanalytic functions and derive a sampling expansion, which in a sense is
the dual of the polyanalytic interpolation formula (\ref{series}) of Theorem
1.

\begin{theorem}
\label{sampfpo} Assume that $\Lambda \subseteq \bC $ is a lattice and $%
s(\Lambda )< (n+1)\inv $.

(i) Then $F$ belongs to the true poly-Fock space $\cF ^n_p (\bC )$, \fif\
the sequence with entries $e^{-\pi |\lambda |^2/2} F(\lambda )$ belongs to $%
\ell ^p (\Lambda )$, with the norm equivalence 
\begin{equation*}
\|F\|_{\cF ^n _p} \asymp \Big(\sum _{\lambda \in \Lambda } |F(\lambda )|^p
e^{-\pi p|\lambda |^2/2} \Big) ^{1/p} \, .
\end{equation*}

(ii) Let 
\begin{equation}
S_{\Lambda ^{0}}^{n}(z)=\left( \frac{\pi ^{n}}{n!}\right) ^{\frac{1}{2}%
}e^{\pi \left\vert z\right\vert ^{2}}\left( \frac{d}{dz}\right) ^{n}\left[
e^{-\pi \left\vert z\right\vert ^{2}}\frac{ \sigma _{\Lambda ^{0}}(z) ^{n+1}%
}{n!z}\right]  \label{eq:c13}
\end{equation}%
be the interpolating function on the adjoint lattice $\Lambda ^{0}$. Then
every $F\in \mathcal{F}_{p}^{n+1}(%
\mathbb{C}
)$ can be written as 
\begin{equation}
F(z)=\sum_{\lambda \in \Lambda }F(\lambda )e^{\pi \overline{\lambda }z-\pi
\left\vert \lambda \right\vert ^{2}}S_{\Lambda ^{0}}^{n}(z-\lambda )\,.
\label{eq:c6}
\end{equation}%
The sampling expansion converges in the norm of $\cF_{p}^{n}(\bC)$ for $%
1\leq p<\infty $ and pointwise for $p=\infty $.
\end{theorem}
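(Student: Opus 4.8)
The plan is to transport Theorem~\ref{loc} from the modulation space $M^p(\bR)$ to the true poly-Fock space through the true polyanalytic Bargmann transform $\cB^{n+1}$, taking the window to be the Hermite function $g=h_n$. First I would check that $h_n$ is an admissible window: it lies in $M^1(\bR)$ (indeed in the Schwartz class), and since $s(\Lambda)<(n+1)\inv$ the result of the previous section shows that $\cG(h_n,\Lambda)$ is a frame for $L^2(\bR)$. Hence Theorem~\ref{loc} applies verbatim with $g=h_n$, and it produces a dual window $\gamma\in M^1(\bR)$, the frame expansion~\eqref{eq:1}, and the norm equivalence~\eqref{eq:2}.

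For part (i) I would chain three equivalences. By Theorem~\ref{loc}, $f\in M^p(\bR)$ \fif\ $(\langle f,\pi_\lambda h_n\rangle)_{\lambda\in\Lambda}\in\ell^p(\Lambda)$, with $\|f\|_{M^p}\asymp\|(\langle f,\pi_\lambda h_n\rangle)\|_{\ell^p}$. By the mapping Proposition and its Corollary, $\cB^{n+1}\colon M^p(\bR)\to\cF^{n+1}_p(\bC)$ is onto with $\|\cB^{n+1}f\|_{\cL_p}\asymp\|f\|_{M^p}$, so every $F\in\cF^{n+1}_p$ equals $\cB^{n+1}f$ for some $f\in M^p$. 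Finally, the defining identity~\eqref{polyBargmann} gives $|F(\lambda)|\,e^{-\pi|\lambda|^2/2}=|V_{h_n}f(\lambda_1,-\lambda_2)|=|\langle f,\pi_{(\lambda_1,-\lambda_2)}h_n\rangle|$; since $(\lambda_1,\lambda_2)\mapsto(\lambda_1,-\lambda_2)$ is a measure-preserving automorphism of the lattice, it affects neither the frame property nor the $\ell^p$-norm. Combining the three statements yields that $F\in\cF^{n+1}_p$ \fif\ $(e^{-\pi|\lambda|^2/2}F(\lambda))\in\ell^p(\Lambda)$, together with the asserted norm equivalence.

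For part (ii) I would apply $\cB^{n+1}$ to the frame expansion $f=\sum_\lambda\langle f,\pi_\lambda h_n\rangle\,\pi_\lambda\gamma$ and push the transform inside using the intertwining relation~\eqref{intertwining}, $\cB^{n+1}(\pi_\lambda\gamma)=\beta_\lambda\cB^{n+1}\gamma$. Since $\cB^{n+1}$ is bounded from $M^p$ onto $\cF^{n+1}_p$, the unconditional convergence in $M^p$ for $1\le p<\infty$ is preserved, while for $p=\infty$ the weak-$^*$ convergence transfers to pointwise convergence of $F$. To identify the summand with~\eqref{eq:c6} I would observe that $s(\Lambda^0)=s(\Lambda)\inv>n+1$, so Theorem~\ref{th1} applied to $\Lambda^0$ furnishes the interpolating function $S^n_{\Lambda^0}$ of~\eqref{eq:c13} as $S^n_{\Lambda^0}=(\pi^n/n!)^{1/2}\cB^{n+1}\gamma_{\Lambda^0}$. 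The biorthogonality relation recorded after Theorem~\ref{loc}, $s(\Lambda)\inv\langle\gamma,\pi_\mu h_n\rangle=\delta_{\mu,0}$ for $\mu\in\Lambda^0$, translates under~\eqref{polyBargmann} into exactly the interpolation property $S^n_{\Lambda^0}(\mu)=\delta_{\mu,0}$; by uniqueness of the dual window one may therefore take $\gamma=\gamma_{\Lambda^0}$ up to the normalization constant. Substituting this and $\langle f,\pi_\lambda h_n\rangle=(\text{phase})\,e^{-\pi|\lambda|^2/2}F(\lambda)$ into $\sum_\lambda\langle f,\pi_\lambda h_n\rangle\,\beta_\lambda\cB^{n+1}\gamma$ and unwinding $\beta_\lambda$, the Gaussian factor $e^{-\pi|\lambda|^2/2}$ from the sample merges with the $e^{-\pi|\lambda|^2/2}$ inside $\beta_\lambda$ to give $e^{-\pi|\lambda|^2}$, which is precisely~\eqref{eq:c6}.

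I expect the main obstacle to be this last identification together with the attendant phase bookkeeping: confirming that the dual window supplied abstractly by Theorem~\ref{loc} coincides, up to a constant, with the concrete $\gamma_{\Lambda^0}$, and verifying that the unimodular factors coming from $\langle f,\pi_\lambda h_n\rangle$, from the definition of $\beta_\lambda$, and from the reflection $\xi\mapsto-\xi$ in~\eqref{polyBargmann} all cancel, so that the coefficient in the expansion is exactly $e^{\pi\bar\lambda z-\pi|\lambda|^2}$ and the constant $(\pi^n/n!)^{1/2}$ drops out. Once these are in place, the norm-equivalence and convergence assertions follow routinely from the boundedness of $\cB^{n+1}$.
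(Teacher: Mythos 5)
Your strategy is the paper's own: apply Theorem~\ref{loc} with window $g=h_n$ (legitimate, since $h_n\in M^1(\bR)$ and Theorem 2 gives the frame property when $s(\Lambda)<(n+1)\inv$), transport everything to the Fock side via $\cB^{n+1}$ using Corollary 2, and use the intertwining relation \eqref{intertwining} plus biorthogonality to identify the expanding function with $S^n_{\Lambda^0}$. Part (i) is essentially the paper's argument. One blemish there: $(\lambda_1,\lambda_2)\mapsto(\lambda_1,-\lambda_2)$ is \emph{not} an automorphism of a general lattice $\Lambda\subseteq\bC$; it maps $\Lambda$ onto the conjugate lattice $\overline{\Lambda}$. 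Since $s(\overline{\Lambda})=s(\Lambda)$, Theorem 2 and Theorem~\ref{loc} apply equally to $\overline{\Lambda}$ and the coefficient sequence is merely re-indexed, so the conclusion stands (the paper is silent on the same point).

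The genuine gap is exactly the step you flag as the main obstacle in part (ii), and your proposed resolution of it is false: dual windows of $\cG(h_n,\Lambda)$ are \emph{not} unique. Because $s(\Lambda)<(n+1)\inv\le 1$, the frame is redundant, and a redundant Gabor frame admits infinitely many dual windows, infinitely many of them in $M^1(\bR)$; the Wexler--Raz relations $s(\Lambda)\inv\langle\gamma,\pi_\mu h_n\rangle=\delta_{\mu,0}$, $\mu\in\Lambda^0$, are countably many linear constraints and do not determine $\gamma$. Equivalently, on the Fock side, $\Lambda^0$ has size $s(\Lambda^0)=s(\Lambda)\inv>n+1$, hence is an interpolating sequence but \emph{not} a uniqueness set for $\cF^{n+1}_2(\bC)$; so the fact that $\cB^{n+1}\gamma$ and $S^n_{\Lambda^0}$ (up to normalization) both interpolate $\delta_{\mu,0}$ on $\Lambda^0$ does not force them to coincide, and in general they do not. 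The paper's argument avoids uniqueness entirely and runs in the opposite direction: the frame expansion \eqref{eq:1} converges in $M^p$ for \emph{every} dual window in $M^1(\bR)$, and the converse direction of the Wexler--Raz/duality theory shows that the concrete function $\gamma_{\Lambda^0}\in M^1(\bR)$ produced in the proof of Theorem~\ref{th1}, whose true polyanalytic Bargmann transform is (up to the normalization constant) $S^n_{\Lambda^0}$, \emph{is} one of these dual windows --- precisely because the interpolation property $S^n_{\Lambda^0}(\mu)=\delta_{\mu,0}$ on $\Lambda^0$ is the biorthogonality condition for $\gamma_{\Lambda^0}$. One then simply \emph{chooses} this dual window in \eqref{eq:1}; with that replacement, the rest of your computation (intertwining, merging of the Gaussian factors, cancellation of the phases and of the constant) goes through as you describe.
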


\begin{proof}
By Corollary~2 there exists an $f\in M^{p}(\bR)$, such that $F=\cB^{n+1}f\in %
\cF_{p}^{n}(\bC)$, more precisely, according to \eqref{polyBargmann} $\cB%
^{n+1}F(z)=e^{-i\pi x\xi }e^{-\pi |z|^{2}/2}\langle f,\pi _{(x,-\xi
)}h_{n}\rangle $. Since $\pi _{\lambda }h_{n}$ is a Banach frame for $M^{p}(%
\bR)$ by Theorem~\ref{loc}, the norm equivalence 
\begin{equation*}
\Vert f\Vert _{M^{p}}\asymp \Big(\sum_{\lambda \in \Lambda }|\langle f,\pi
_{\lambda }h_{n}\rangle |^{p}\Big)^{1/p}
\end{equation*}%
translates into the norm equivalence 
\begin{equation*}
\Vert F\Vert _{\cF_{p}^{n}}\asymp \Big(\sum_{\lambda \in \Lambda }|F(\lambda
)|^{p}e^{-\pi p|z|^{2}/2}\Big)^{1/p},\qquad \qquad \forall F\in \cF_{p}^{n}(%
\bC)\,.
\end{equation*}

We now apply the polyanalytic Bargmann transform to \eqref{eq:1} and obtain
a reconstruction formula for the samples of $F\in \mathcal{F}^{n+1}_p(%
\mathbb{C}
)$: 
\begin{equation*}
F(z) = \cB ^{n+1}f(z) =\sum_{\lambda \in \Lambda }F(\lambda )e^{-\frac{\pi }{%
2}\left\vert \lambda \right\vert ^{2}}e^{-\pi i\lambda _{1}\lambda _{2}}%
\mathcal{B}^{n+1}\left( \pi _\lambda\gamma \right) (z).
\end{equation*}%
Now the intertwining property (\ref{intertwining}) gives 
\begin{eqnarray}
F(z)&=&\sum_{\lambda \in \Lambda }F(\lambda )e^{-\frac{\pi }{2}\left\vert
\lambda \right\vert ^{2}}e^{-\pi i\lambda _{1}\lambda _{2}}\beta _{\lambda }%
\mathcal{B}^{n+1} \gamma (z)  \notag \\
&=& \sum_{\lambda \in \Lambda }F(\lambda )e^{-\pi \left\vert \lambda
\right\vert ^{2}}e^{\pi \overline{\lambda }z}\mathcal{B}^{n+1} \gamma
(z-\lambda )\, .  \label{eq:c9}
\end{eqnarray}
Since the frame expansion \eqref{eq:1} converges in $M^p(\bC)$ and since $%
\cB ^{n+1}$ is an isometry from $M^p(\bR )$ onto $\cF ^n_p$, the sampling
expansion \eqref{eq:c9} must converge in $\cF ^n_p(\bC)$.

The expansion~\eqref{eq:c9} holds for every dual window $\gamma \in M^1(\bR )
$. By choosing the particular window, we can derive a more explicit formula
for $\cB ^{n+1}\gamma $. Since every dual window $\gamma $ satisfies the
biorthogonality relation $\delta _{\mu ,0} = s(\Lambda )\inv \,
\langle\gamma , \pi _\mu h_n\rangle = \cB ^{n+1}\gamma (\bar{\mu })\, e^{\pi
i \mu_1 \mu _2 -\pi |\mu |^2/2}$, the true poly Bargmann transform of $%
\gamma $ is an interpolating functions on $\Lambda ^0$. Of all such
functions we may therefore use the interpolating function $S_{\Lambda ^0}^n
= \cB ^{n+1} \gamma $ defined in \eqref{eq:c13} as the expanding function in %
\eqref{eq:c9}.
\end{proof}

The following is a sampling theorem which can be applied to the vector
valued situation studied in \cite{CharlyYurasuper}. Again we emphasize that
this is an explicit formula while the one obtained with the superframe
representation is not, because we do not know the dual vectorial window
explicitly.

\begin{corollary}
If $s(\Lambda )<\frac{1}{n+1}$, then every $F\in \mathbf{F}_{p}^{n+1}(%
\mathbb{C}
)$ can be written as:%
\begin{equation*}
F(z)=\sum_{\lambda \in \Lambda }F(\lambda )e^{\pi \overline{\lambda }z-\pi
\left\vert \lambda \right\vert ^{2}}\mathbf{S}_{\Lambda ^{0}}^{n}(z-\lambda
),
\end{equation*}%
where%
\begin{equation*}
\mathbf{S}_{\Lambda ^{0}}^{n}(z)=\sum_{k=0}^{n}S_{\Lambda ^{0}}^{k}(z).
\end{equation*}
\end{corollary}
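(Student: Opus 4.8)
The plan is to reduce the vector-valued (full poly-Fock) statement to the scalar (true poly-Fock) sampling theorem already established in Theorem~\ref{sampfpo}, using the orthogonal decomposition of the full polyanalytic Fock space. Recall from the Corollary following Proposition~\ref{densityhm} that for $1<p<\infty$ we have the decomposition $\mathbf{F}_{p}^{n+1}(\bC) = \mathcal{F}_{p}^{1}(\bC)\oplus\cdots\oplus\mathcal{F}_{p}^{n+1}(\bC)$. Thus any $F\in\mathbf{F}_{p}^{n+1}(\bC)$ can be written uniquely as $F = \sum_{k=0}^{n} F_k$, where each component $F_k\in\mathcal{F}_{p}^{k+1}(\bC)$ is a true polyanalytic function of order $k+1$, obtained by applying the projection $P^{k}$ (shown bounded on $\mathcal{L}_p$ in the earlier Proposition on the polyanalytic projection).

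Given this decomposition, the strategy is to apply the scalar sampling theorem (Theorem~\ref{sampfpo}(ii)) to each component $F_k$ separately. Since $s(\Lambda)<\frac{1}{n+1}\leq\frac{1}{k+1}$ for each $0\leq k\leq n$, the hypothesis of Theorem~\ref{sampfpo} is satisfied for every order $k+1$, and so each $F_k\in\mathcal{F}_{p}^{k+1}(\bC)$ admits the expansion
\begin{equation*}
F_k(z)=\sum_{\lambda\in\Lambda}F_k(\lambda)\,e^{\pi\overline{\lambda}z-\pi|\lambda|^2}\,S_{\Lambda^0}^{k}(z-\lambda),
\end{equation*}
with convergence in $\mathcal{F}_{p}^{k+1}(\bC)$. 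First I would sum these $n+1$ expansions over $k=0,\dots,n$ and interchange the two (finite-in-$k$) sums, which is legitimate since the $k$-sum is finite. This yields
\begin{equation*}
F(z)=\sum_{\lambda\in\Lambda}\Big(\sum_{k=0}^{n}F_k(\lambda)\,S_{\Lambda^0}^{k}(z-\lambda)\Big)e^{\pi\overline{\lambda}z-\pi|\lambda|^2}.
\end{equation*}

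The remaining task, and the one subtle point, is to replace the individual sampled values $F_k(\lambda)$ by the single value $F(\lambda)$ and to collapse the inner sum into $\mathbf{S}_{\Lambda^0}^n(z-\lambda)=\sum_{k=0}^{n}S_{\Lambda^0}^{k}(z)$. The key fact is that the scalar interpolating functions satisfy $S_{\Lambda^0}^{k}(\mu)e^{-\pi|\mu|^2/2}=\delta_{\mu,0}$ for $\mu\in\Lambda$, which was the cardinal/interpolation property established in the proof of Theorem~\ref{th1} and carried into Theorem~\ref{sampfpo}. Because each cardinal function $S_{\Lambda^0}^{k}$ vanishes at the nonzero lattice points with the same Kronecker pattern independent of $k$, evaluating the expansion of $F_k$ at a lattice point $\mu$ recovers exactly $F_k(\mu)$, and so $\sum_{k=0}^{n}F_k(\lambda)S_{\Lambda^0}^{k}(z-\lambda)$ has the correct sampled coefficients; I expect the main obstacle to be arguing cleanly that the diagonal (Kronecker) behavior lets us replace the component-wise samples $\{F_k(\lambda)\}$ by the aggregate sample $F(\lambda)=\sum_k F_k(\lambda)$ and thereby factor out the common coefficient $F(\lambda)$ against the summed kernel $\mathbf{S}_{\Lambda^0}^{n}$. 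Once that identification is made, convergence in $\mathbf{F}_p^{n+1}(\bC)$ follows from convergence of each of the finitely many component series in the respective $\mathcal{F}_p^{k+1}(\bC)$ together with the boundedness of the projections $P^k$, completing the proof.
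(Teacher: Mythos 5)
Your strategy is the same as the paper's own (very terse) proof: decompose $F=\sum_{k=0}^{n}F_{k}$ with $F_{k}\in \cF _{p}^{k+1}(\bC )$ and apply Theorem~\ref{sampfpo} to each summand. The paper says nothing beyond this, so the step you single out as ``the one subtle point'' --- replacing the componentwise samples $F_{k}(\lambda )$ by the aggregate samples $F(\lambda )$ --- is exactly what is left unjustified there as well. However, your proposed bridge fails for a concrete reason: the cardinal property proved in Theorem~\ref{th1} and invoked in Theorem~\ref{sampfpo} is $S_{\Lambda ^{0}}^{k}(\mu )=\delta _{\mu ,0}$ for $\mu $ in the \emph{adjoint} lattice $\Lambda ^{0}$, not for $\mu \in \Lambda $ as you state. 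Under the standing hypothesis $s(\Lambda )<\tfrac{1}{n+1}$ one has $s(\Lambda ^{0})>n+1$, so $\Lambda $ is a fine lattice and $\Lambda ^{0}$ a coarse one; for $\lambda ,\lambda ^{\prime }\in \Lambda $ the quantity $S_{\Lambda ^{0}}^{k}(\lambda ^{\prime }-\lambda )$ is not a Kronecker delta, evaluating the series at points of $\Lambda $ does not collapse it to a single term, and there is no pointwise mechanism that lets you trade $F_{k}(\lambda )$ for $F(\lambda )$. Even if such a cardinal property were available, agreement of two expansions at the sample points would not give equality of the functions without already knowing $\Lambda $ is a uniqueness set for $\mathbf{F}_{p}^{n+1}(\bC )$, which is part of what is being proved.

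What the exchange really requires is that every cross term $\sum_{\lambda \in \Lambda }F_{j}(\lambda )e^{\pi \overline{\lambda }z-\pi |\lambda |^{2}}S_{\Lambda ^{0}}^{k}(z-\lambda )$ with $j\neq k$ vanish. Writing $S_{\Lambda ^{0}}^{k}\propto \cB ^{k+1}\gamma _{k}$ with $\cB \gamma _{k}\propto \sigma _{\Lambda ^{0}}^{k+1}(z)/z$, and using the intertwining property \eqref{intertwining} together with Janssen's representation of the mixed frame operator, this vanishing is equivalent (up to phases) to the Wexler--Raz type biorthogonality $\langle \gamma _{k},\pi _{\mu }h_{j}\rangle =0$ for all $\mu \in \Lambda ^{0}$ and $j\neq k$, i.e.\ to $(\gamma _{0},\dots ,\gamma _{n})$ being a dual window of the vector-valued (super-)frame of \cite{CharlyYurasuper}. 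For $j<k$ this holds, because $\sigma _{\Lambda ^{0}}^{k+1}/z$ vanishes to order $k+1$ at the points of $\Lambda ^{0}\setminus \{0\}$, hence $\cB ^{j+1}\gamma _{k}$ vanishes on $\Lambda ^{0}$. But for $j>k$ it fails: already for $j=k+1$ one has $\cB ^{k+2}\gamma _{k}(\mu )\propto \bigl( d/dz\bigr) ^{k+1}\bigl[ \sigma _{\Lambda ^{0}}^{k+1}(z)/z\bigr] _{z=\mu }\neq 0$ at every $\mu \in \Lambda ^{0}\setminus \{0\}$, so the cross terms with $j>k$ genuinely survive. Thus the missing step is not a formality that a Kronecker argument can supply: it would require kernels whose true polyanalytic Bargmann transforms of \emph{all} orders $0,\dots ,n$ vanish on $\Lambda ^{0}\setminus \{0\}$ (kernels built from $\sigma _{\Lambda ^{0}}^{n+1}$, as in the super-frame duality of Gr\"{o}chenig--Lyubarskii), whereas the $S_{\Lambda ^{0}}^{k}$ defined in \eqref{eq:c13} do not have this property. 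In short: your route is the paper's route, the step you flag is a genuine gap (in the paper too), and your attempted bridge rests on a property of $S_{\Lambda ^{0}}^{k}$ --- cardinality on $\Lambda $ --- that is false; what one can legitimately conclude from summing the component expansions is only the formula with componentwise samples, $F(z)=\sum_{k=0}^{n}\sum_{\lambda \in \Lambda }F_{k}(\lambda )e^{\pi \overline{\lambda }z-\pi |\lambda |^{2}}S_{\Lambda ^{0}}^{k}(z-\lambda )$.
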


\begin{proof}
By Corollary 1, $\mathbf{F}_{p}^{n+1}(%
\mathbb{C}
)$ can be written as a direct sum of the spaces $\mathcal{F}_{p}^{k}(%
\mathbb{C}
)$. Thus, one can write $\mathbf{F}=\sum_{k=0}^{n}F_{k}$, with $F_{k}\in 
\mathcal{F}^{k+1}(%
\mathbb{C}
)$, and the result follows from Theorem~\ref{sampfpo}.
\end{proof}


\end{document}